\documentclass[11pt]{amsart}
\usepackage{amscd}
\usepackage[arrow,matrix]{xy}
\usepackage{graphicx, tikz}
\usepackage{comment}
\usepackage{amsmath}
\usepackage{etoolbox}
\usepackage{mathtools} 
\usepackage{latexsym, amssymb}
\numberwithin{equation}{section}
\theoremstyle{plain}

\newtheorem{lemma}{Lemma}[section]

\newtheorem{theorem}[lemma]{Theorem}

\theoremstyle{definition}

\newtheorem*{definition*}{Definition}
\newtheorem{remark}[lemma]{Remark}

\usepackage{color}
\usepackage{cancel}

\definecolor{brown}{RGB}{150,100,0}

\definecolor{purple}{RGB}{150,0,100}
\definecolor{grey}{RGB}{128,128,128}

\usepackage[hidelinks]{hyperref}
\newcommand{\R}{{\mathbb R}}

\newcommand{\E}{{\mathbb E}}

\newcommand{\Z}{{\mathbb Z}}

\newcommand{\Q}{{\mathbb Q}}

\newcommand{\Aa}{{\mathcal A}}

\newcommand{\Cc}{{\mathcal C}} 

\newcommand{\Ff}{{\mathcal F}}
\newcommand{\Gg}{{\mathcal G}} 
\newcommand{\Hh}{{\mathcal H}}

\newcommand{\Ll}{{\mathcal L}} 
\newcommand{\Mm}{{\mathcal M}} 

\newcommand{\Pp}{{\mathcal P}}

\newcommand{\Uu}{{\mathcal U}}

\newcommand{\Xx}{{\mathcal X}}
\newcommand{\Yy}{{\mathcal Y}}
\newcommand{\Zz}{{\mathcal Z}}

\mathchardef\mhyp="2D

\newcommand{\eps}{{\varepsilon}}

\newcommand{\pb}{\mathbf{p}}

\newcommand{\Meas}{{\rm Meas}}

\def\NABLA#1{{\mathop{\nabla\kern-.5ex\lower1ex\hbox{$#1$}}}}
\def\Nabla#1{\nabla\kern-.5ex{}_#1}

\newcommand{\p}{{\partial}}

\newcommand{\LRA}{\Longrightarrow}

\newcommand{\rom}[1]{{\em #1}}

\renewcommand{\)}{\rom)}

\DeclareMathOperator{\supp}{supp}

\begin{document}
\title[Estimating Conditional Expectation]{Estimating conditional expectation}
\author[H. V. L\^e]{H\^ong V\^an L\^e}	
\address{Institute of Mathematics of the Czech Academy of Sciences, Zitna 25, 11567 Praha 1, Czech Republic}
\email{hvle@math.cas.cz}

\date{August, 10, 2026}
\keywords{stochastic ill-posed problem, conditional  expectation}
\subjclass[2020]{Primary: 62G08, Secondary: 62R30}

\begin{abstract} 
In this paper, we consider the problem of estimating conditional expectations as an ill-posed inverse problem. We propose a solution based on a generalization of Vapnik's theorem \cite[Theorem 7.2]{Vapnik1998} for solving stochastic ill-posed problems in Hilbert spaces. As an application, we derive  a  new upper bound for sample  errors of  conditional expectation estimation. 
\end{abstract}

\dedicatory{Dedicated to  the memory  of Professor L\^e D\~ung Tr\'ang}
\maketitle

\section{Introduction}\label{sec:intro}
For a measurable space $\Xx$, denote by $\Sigma_\Xx$ the $\sigma$-algebra of $\Xx$ and by $\Pp(\Xx)$ the space of all probability measures on $\Xx$. The space $\Pp(\Xx)$ is equipped with the $\sigma$-algebra $\Sigma_w$, which is the smallest $\sigma$-algebra such that for any $A \in \Sigma_\Xx$, the evaluation map 
$$ev_{A}: \Pp(\Xx) \to \R, \quad \mu \mapsto \mu(A)$$ 
is measurable.

For measurable spaces $\Xx, \Yy$, denote by $\Meas(\Xx, \Yy)$ the space of all measurable mappings from $\Xx$ to $\Yy$.

In supervised learning, given a data set of labeled items
\[
S_m=\{(x_1,y_1),\ldots,(x_m,y_m)\}\in(\Xx\times\Yy)^m,
\]
sampled according to $\mu^m$, where $\mu\in \Pp(\Xx\times \Yy)$ is an unknown probability measure governing the distribution of i.i.d. labeled items $(x_i, y_i)\in (\Xx \times \Yy, \mu)$, $i \in [1, m]$, the aim of conditional probability estimation is to find the best approximation $f_{S_m}$ of a measurable map $\pb: \Xx \to \Pp(\Yy)$ in a hypothesis class $\Hh \subset \Meas(\Xx, \Pp(\Yy))$ which is a regular conditional probability measure for $\mu$ with respect to the projection $\Pi_\Xx: \Xx\times \Yy \to \Xx$.
 
In this paper, we assume there exists a regular conditional probability measure $\mu_{\Yy|\Xx} : \Xx \to \Pp(\Yy)$ for a joint probability distribution $\mu\in \Pp(\Xx\times \Yy)$ with respect to the projection $\Pi_\Xx:\Xx\times \Yy \to \Xx$, and we denote by $\mu_\Xx \coloneqq (\Pi_\Xx)_* \mu\in \Pp(\Xx)$ the marginal probability measure of $\mu$. For instance, if $\Yy$ is a Souslin space, there exists uniquely $\mu_\Xx$-a.e. a regular conditional probability measure $\Xx \to \Pp(\Yy)$ for $\mu$ with respect to $\Pi_\Xx$  \cite[Theorem 3.1 (4)]{LFR2004}. By Bayes' theorem \cite[Theorem 1.31, Problem 9]{Schervish1997}, there exists a regular conditional probability for $\mu$ with respect to the projection $\Pi_\Xx$ if there exists a regular conditional probability measure $\Yy \to \Pp(\Xx)$ for $\mu$ with respect to the projection $\Pi_\Yy: \Xx\times \Yy \to \Yy$, which, moreover, can be represented as a dominated Markov kernel.   
  
There are known algorithms for estimating regular conditional probability measures $\mu_{\Yy|\Xx}$ for $\mu$ with respect to the projection $\Pi_\Xx$ based on i.i.d. samples $S_m \in (\Xx\times \Yy)^m$, where $\mu$ is unknown, via Conditional Mean Embedding (CME) estimation \cite{PM2020}.
 
The problem of CME estimation  can be regarded as  Hilbert-space-valued  regression  estimation, see, e.g., \cite{PM2020}  and references therein. $\R$-valued  regression estimation is a particular  case of conditional expectation estimation, which we consider in this paper.  Denote by $\Ll^1(\Yy, \mu_\Yy)$ the space of all $\mu_\Yy$-integrable functions on $\Yy$. For a function $g \in \mathcal{L}^1(\mathcal{Y}, \mu_\mathcal{Y})$, it is known (e.g., using the disintegration formula) that
\begin{equation}
r_g^\mu(x) \coloneqq \int_{\mathcal{Y}} g(y) \, d\mu_{\mathcal{Y}\vert{}\mathcal{X}}(y \mid x) \in \Ll^1(\Xx, \mu_\Xx)\label{eq:rgmu}
\end{equation}
is a regular conditional expectation of $\Pi_\mathcal{Y}^*(g) \in \mathcal{L}^1(\mathcal{X} \times \mathcal{Y}, \mu)$. More precisely, $ \Pi_\Xx^*(r_g^\mu) \in \Ll^1(\Xx\times \Yy, \mu)$ is a regular version of the conditional expectation $\E_\mu(\Pi_\Yy^*(g)|\Pi_\Xx)$.

We propose a novel approach to estimating the regular conditional expectation $r_g^\mu$ via i.i.d. samples $S_m \in (\Xx\times \Yy)^m$ (where $g$ is known but $\mu$ is unknown) by   formulating this   task  as a stochastic ill-posed inverse problem. We solve this by applying a generalization of Vapnik's method for solving such problems.  We postpone applications of our method to CME  estimation to a later  paper.

This paper is organized as follows. In Section \ref{sec:vapnikh}, we provide a generalization of Vapnik's theorem \cite[Theorem 7.2]{Vapnik1998} for solving stochastic ill-posed problems in Hilbert spaces and discuss    related  results (Theorem \ref{thm:cvapnikhl},  Remark \ref{rem:cvapnikhl}). In Section \ref{sec:estce}, we apply  Theorem \ref{thm:cvapnikhl} to the problem of estimating $r_g^\mu$ via the sample $S_m$ where $\mu$ is unknown  and discuss  related  results (Theorems \ref{thm:genvapnik}, \ref{thm:uniconsistency}, Remarks \ref{rem:VI}, \ref{rem:genvapnik}).  This paper also contains Appendix  \ref{sec:HK}  where we give a proof of a technical lemma \ref{lem:HK}, using generalized 
Koksma--Hlawka inequality of Aistleitner and Dick \cite{AD2015}.

\section*{Acknowledgement}
Research of HVL was supported by the Institute of Mathematics, Czech Academy of Sciences (RVO: 67985840). 

\section{A generalization of Vapnik's theorem for Hilbert spaces}\label{sec:vapnikh}

\begin{theorem}[A generalization of Vapnik's theorem]\label{thm:cvapnikhl}  Let $E_1$ be a real Hilbert space and $E_2$ a normed vector space with metric
	\[
	\rho_2(u,v)\coloneqq\|u-v\|_{E_2}.
	\]
	Let $A\in\mathcal L(E_1,E_2)$, let $\mathcal M\subset E_1$ be
	nonempty, norm closed, and convex, and suppose that the restriction
	$A|_{\mathcal M}$ is injective. Let $f\in\mathcal M$ and set $F=Af$.
	
	For every $m\in\mathbb N^+$, let
	$(\Xx_m,\mu_m)$ be a probability space. For each
	$S_m\in\Xx_m$, let $F_{S_m}\in E_2$ and
	$A_{S_m}\in\mathcal L(E_1,E_2)$.
For $\gamma_m>0$, let $R_{\gamma_m} (\cdot, F_{S_m}, A_{S_m}): \Mm \to \R_{\ge 0}$ be defined by
\[
R_{\gamma_m}(h,F_{S_m},A_{S_m}) \coloneqq \|A_{S_m}h-F_{S_m}\|_{E_2}^{2} + \gamma_m\|h\|_{E_1}^{2}.
\]

1) Then $R_{\gamma_m} (\cdot, F_{S_m}, A_{S_m})$ has a unique minimizer $f_{S_m} \in \Mm$.

Fix $\varepsilon>0$, and choose $C_1 > 0$ and $C_2\ge 0$ such that
\[
\bigl(C_1+C_2\|f\|\bigr)^2< \eps.
\]
2) Then there exists
\[
\gamma_0=\gamma_0(\varepsilon,C_1,C_2,f,A,\mathcal M)>0
\]
such that, whenever $\gamma_m\le\gamma_0$,
\begin{equation}
	\begin{aligned}
		\mu_m^*\bigl\{S_m:\|f_{S_m}-f\|^2>\varepsilon\bigr\}
		&\le
		\mu_m^*\bigl\{S_m:\|F_{S_m}-F\|>C_1\sqrt{\gamma_m}\bigr\}\\
		&\quad+
		\mu_m^*\bigl\{S_m:
		\|A_{S_m}-A\|_{\mathrm{op}}>C_2\sqrt{\gamma_m}
		\bigr\}.\label{eq:cpe}
	\end{aligned}
\end{equation}
\end{theorem}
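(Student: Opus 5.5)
The proof has two parts. Part 1 is a direct variational argument, and part 2 combines a deterministic estimate on a "good" event with a weak-compactness argument that produces $\gamma_0$.

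\emph{Part 1.} Write $R(h)=R_{\gamma_m}(h,F_{S_m},A_{S_m})$. The map $h\mapsto \|A_{S_m}h-F_{S_m}\|^2$ is convex and continuous on $E_1$, and $\gamma_m\|h\|^2$ is strictly convex. So $R$ is strictly convex, continuous, and coercive on $\mathcal M$. A minimizing sequence is therefore bounded and has a weakly convergent subsequence. Its weak limit lies in $\mathcal M$, because a norm closed convex set is weakly closed. $R$ is weakly lower semicontinuous, being convex and continuous, so the weak limit is a minimizer. Uniqueness follows from strict convexity.

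\emph{Part 2, deterministic step.} Work on the good event $G=\{\|F_{S_m}-F\|\le C_1\sqrt{\gamma_m},\ \|A_{S_m}-A\|_{\mathrm{op}}\le C_2\sqrt{\gamma_m}\}$. Outside $G$ the bound \eqref{eq:cpe} is just subadditivity of outer measure, so it suffices to show that $G\subset\{\|f_{S_m}-f\|^2\le\eps\}$ once $\gamma_m\le\gamma_0$. On $G$ we have $A_{S_m}f-F_{S_m}=(A_{S_m}-A)f+(F-F_{S_m})$, so
\[
R(f)\le \gamma_m\bigl((C_1+C_2\|f\|)^2+\|f\|^2\bigr).
\]
Set $c\coloneqq(C_1+C_2\|f\|)^2<\eps$. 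Minimality of $f_{S_m}$ gives $\gamma_m\|f_{S_m}\|^2\le R(f_{S_m})\le R(f)$, hence $\|f_{S_m}\|^2\le \|f\|^2+c$. It also gives $\|A_{S_m}f_{S_m}-F_{S_m}\|\le\sqrt{\gamma_m}\sqrt{c+\|f\|^2}$. By the triangle inequality,
\[
\|Af_{S_m}-Af\|\le \|A_{S_m}f_{S_m}-F_{S_m}\|+\|A-A_{S_m}\|_{\mathrm{op}}\|f_{S_m}\|+\|F_{S_m}-F\|\le K\sqrt{\gamma_m},
\]
where $K=\sqrt{c+\|f\|^2}\,(1+C_2)+C_1$ depends only on $C_1,C_2,\|f\|$.

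\emph{Part 2, key claim (the main obstacle).} There is $\delta>0$ such that every $h\in\mathcal M$ with $\|h\|^2\le\|f\|^2+c$ and $\|Ah-Af\|\le\delta$ satisfies $\|h-f\|^2\le\eps$. Given the claim, $\gamma_0\coloneqq(\delta/K)^2$ works. I would prove the claim by contradiction. Suppose $h_n\in\mathcal M$ satisfy $\|h_n\|^2\le\|f\|^2+c$, $\|Ah_n-Af\|\to0$ and $\|h_n-f\|^2>\eps$. Pass to a weakly convergent subsequence $h_n\rightharpoonup h$; then $h\in\mathcal M$ since $\mathcal M$ is weakly closed. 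Bounded linear maps are weak-to-weak continuous, so $Ah_n\rightharpoonup Ah$ while $Ah_n\to Af$ in norm, and therefore $Ah=Af$. Injectivity of $A|_{\mathcal M}$ then gives $h=f$. Finally, using $\langle h_n,f\rangle\to\|f\|^2$,
\[
\limsup_n\|h_n-f\|^2=\limsup_n\bigl(\|h_n\|^2-2\langle h_n,f\rangle+\|f\|^2\bigr)\le \|f\|^2+c-\|f\|^2=c<\eps,
\]
which is the desired contradiction. This is exactly where the hypothesis $(C_1+C_2\|f\|)^2<\eps$ enters. The main subtlety is that only the weak topology is available: norm convergence of $h_n$ comes from combining the weak limit with the norm bound $\|f\|^2+c$.
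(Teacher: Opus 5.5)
Your proof is correct and follows the paper's argument essentially step for step. Both use the same coercivity and weak-closedness argument for part 1, the same good event with the bounds $\|f_{S_m}\|^2\le\|f\|^2+c$ and $\|Af_{S_m}-Af\|\le K\sqrt{\gamma_m}$ (your $K$ is the paper's $K_0$), and the same weak-compactness-plus-injectivity contradiction. The only cosmetic difference is that the paper first proves a lemma giving $|\langle h-f,f\rangle|<(\eps-c)/4$ on the bounded set and then expands $\|f_{S_m}-f\|^2$, whereas you run the contradiction directly on $\|h_n-f\|^2$.
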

Here  $\mu_m^*$ denotes the outer measure defined by $\mu_m$. The use  of the outer measure is necessary since   the map $S_m \mapsto f_{S_m}$ is not  required  to be measurable. 
\begin{proof}[Proof of Theorem \ref{thm:cvapnikhl}] 
 1)   Fix $S_m\in\Xx_m$. Since
\[
R_{\gamma_m}(h,F_{S_m},A_{S_m})
\ge \gamma_m\|h\|_{E_1}^2,
\]
the functional $R_{\gamma_m}(\cdot,F_{S_m},A_{S_m})$ is coercive
on $\mathcal M$. Let $(h_j)\subset\mathcal M$ be a minimizing
sequence. Then $(h_j)$ is bounded. Since $E_1$ is reflexive, after
passing to a subsequence we have $h_j\rightharpoonup h$ in $E_1$.
Because $\mathcal M$ is norm closed and convex, it is weakly closed,
so $h\in\mathcal M$.

The map
\[
h\longmapsto
\|A_{S_m}h-F_{S_m}\|_{E_2}^2+\gamma_m\|h\|_{E_1}^2
\]
is weakly lower semicontinuous. Hence $h$ is a minimizer. Finally,
the first term is convex and the second is strictly convex, so the
minimizer is unique. We denote it by $f_{S_m}$.

2) Consider the good event
\begin{equation}
\mathcal C_m \coloneqq
\left\{ S_m \in \Xx_m: \|F_{S_m}-F\|\le C_1\sqrt{\gamma_m}, \quad \|A_{S_m}-A\|_{\mathrm{op}}\le C_2\sqrt{\gamma_m} \right\}.\label{eq:cm}
\end{equation}

Since $f_{S_m}$ is the minimizer of $R_{\gamma_m} (\cdot, F_{S_m}, A_{S_m})$, we have
\begin{equation}
\|A_{S_m}f_{S_m}-F_{S_m}\|^2+\gamma_m\|f_{S_m}\|^2 \le \|A_{S_m}f-F_{S_m}\|^2+\gamma_m\|f\|^2.\label{eq:est1}
\end{equation}
For $S_m \in \mathcal C_m$, by the triangle inequality, we have
\begin{equation}
\|A_{S_m}f-F_{S_m}\| \le \|(A_{S_m}-A)f\|+\|F-F_{S_m}\| \stackrel{\eqref{eq:cm}}{\le} \sqrt{\gamma_m}\bigl(C_2\|f\|+C_1\bigr).\label{eq:est2}
\end{equation}
Set $$r \coloneqq C_1+C_2\|f\|.$$
We    obtain from  \eqref{eq:est1},  dropping the  first nonnegative  term on  its LHS,  taking into account \eqref{eq:est2}: 
\begin{equation}
\|f_{S_m}\|^2\le \|f\|^2+r^2  \qquad\text{ for }  S_m \in   \Cc_m.\label{eq:fmbounded}
\end{equation}
From \eqref{eq:cm} and \eqref{eq:fmbounded}, we obtain 
\begin{equation}
\{ f_{S_m}: S_m \in \Cc_m\} \subset K \coloneqq \left\{ h \in \mathcal M: \, \|h\|^2\le\|f\|^2+r^2 \right\}. \label{eq:fmbounded1}
 \end{equation}

The set $K$ is weakly compact. Indeed, $\mathcal M$ is weakly
closed because it is norm closed and convex, while the closed ball
of $E_1$ appearing in the definition of $K$ is weakly compact.

Since $f_{S_m}$ is the minimizer of $R_{\gamma_m} (\cdot, F_{S_m}, A_{S_m})$, combining \eqref{eq:est1} and \eqref{eq:est2}, for $S_m \in \Cc_m$ we have
\begin{equation}
\|A_{S_m}f_{S_m}-F_{S_m}\| \le \sqrt{\gamma_m}\sqrt{\|f\|^2+r^2}. \label{eq:dist21}
\end{equation}
Set $K_0\coloneqq C_1+(1+C_2)\sqrt{\|f\|^2+r^2}$. 

Then for $S_m \in \Cc_m$, we have
\begin{equation}
\begin{aligned}
	\|Af_{S_m}-F\|
	&\le
	\|(A-A_{S_m})f_{S_m}\| +\|A_{S_m}f_{S_m}-F_{S_m}\| +\|F_{S_m}-F\|\\
	&\stackrel{\eqref{eq:dist21}}{\le} K_0\sqrt{\gamma_m}.\label{eq:dist22}
\end{aligned}
\end{equation}

\begin{lemma}\label{lem:weakest}
For every $g\in E_1$ and every $\eta>0$, there exists $\delta>0$ such that
\[
h\in K,\qquad \|Ah-Af\|<\delta \quad\Longrightarrow\quad |\langle h-f,g\rangle|<\eta.
\]
\end{lemma}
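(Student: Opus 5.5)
We argue by contradiction, using the weak compactness of $K$ established just above together with the injectivity of $A|_{\mathcal M}$. Suppose the statement fails for some $g\in E_1$ and $\eta>0$. Then for every $n\in\mathbb N^+$, taking $\delta=1/n$, we obtain $h_n\in K$ with
\[
\|Ah_n-Af\|<1/n \qquad\text{and}\qquad |\langle h_n-f,g\rangle|\ge\eta .
\]

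Since $K$ is weakly compact and $E_1$ is a Hilbert space, $K$ is weakly sequentially compact (Eberlein--\v{S}mulian, or directly: $K$ is bounded and closed balls of a Hilbert space are weakly sequentially compact, while $K$ is weakly closed). Passing to a subsequence, we may therefore assume $h_n\rightharpoonup h_\infty$ for some $h_\infty\in K\subset\mathcal M$.

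Next we identify the limit through $A$. A bounded linear operator $A\in\mathcal L(E_1,E_2)$ is weak-to-weak continuous, because $\phi\circ A\in E_1^*$ for every $\phi\in E_2^*$. Hence $Ah_n\rightharpoonup Ah_\infty$ in $E_2$. On the other hand, $Ah_n\to Af$ in norm, so also weakly. Weak limits in a normed space are unique, since $E_2^*$ separates points by Hahn--Banach, so $Ah_\infty=Af$. Both $h_\infty$ and $f$ lie in $\mathcal M$, and $A|_{\mathcal M}$ is injective, so $h_\infty=f$.

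Finally, $h_n\rightharpoonup f$ gives $\langle h_n-f,g\rangle\to 0$, which contradicts $|\langle h_n-f,g\rangle|\ge\eta$ for all $n$. This proves the lemma.

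The only delicate point is passing from weak compactness to a convergent subsequence, and this is standard in Hilbert spaces. Note that the argument uses injectivity only on $\mathcal M$, which is why we need $h_\infty\in\mathcal M$. That membership comes from the weak closedness of $\mathcal M$, already established in the text.
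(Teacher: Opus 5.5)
Your proof is correct and follows essentially the same argument as the paper. Both argue by contradiction, extract a weakly convergent subsequence in the weakly compact set $K$ via Eberlein--\v{S}mulian, use weak continuity of the bounded operator $A$ to get $Ah_\infty=Af$, and then invoke injectivity of $A|_{\mathcal M}$ to conclude $h_\infty=f$. Your additional justifications (weak-to-weak continuity of bounded linear maps, uniqueness of weak limits via Hahn--Banach) only make explicit steps the paper leaves implicit.
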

\begin{proof}
	Suppose the assertion is false. Then, for every $j\in\mathbb N^+$,
	there exists $h_j\in K$ such that
	\[
	\|Ah_j-Af\|<\frac1j,
	\qquad
	|\langle h_j-f,g\rangle|\ge\eta.
	\]
	Since $K$ is weakly compact, the Eberlein--Shmulyan theorem \cite[p. 141]{Yosida1995} implies
	that, after passing to a subsequence,
	\[
	h_j\rightharpoonup h
	\qquad\text{for some }h\in K.
	\]
	Since $A$ is bounded and linear, $Ah_j\rightharpoonup Ah$ in $E_2$.
	On the other hand, $Ah_j\to Af$ in norm, and hence weakly.
	Consequently, $Ah=Af$. Since $h,f\in\mathcal M$ and
	$A|_{\mathcal M}$ is injective, we obtain $h=f$. But then
	\[
	\langle h_j-f,g\rangle\longrightarrow0,
	\]
	contradicting
	$|\langle h_j-f,g\rangle|\ge\eta$.
\end{proof}

{\it Continuation of the proof of Theorem \ref{thm:cvapnikhl}.}  Apply Lemma \ref{lem:weakest} with 
$$g=f  \text{ and  }\eta=\frac{\varepsilon-r^2}{4},$$ 
and let $\delta>0$ be the resulting constant.
Choose $\gamma_0>0$ so small that
\[
K_0\sqrt{\gamma_0}<\delta.
\]
Then, whenever $\gamma_m\le\gamma_0$, \eqref{eq:dist22} and
Lemma \ref{lem:weakest} give
\begin{equation}
|\langle f_{S_m}-f,f\rangle|
<\frac{\varepsilon- r^2}{4},
\qquad S_m\in\mathcal C_m.\label{eq:dist1m}
\end{equation}

At the same time, by \eqref{eq:fmbounded1}:
\begin{equation}
\|f_{S_m}\|^2-\|f\|^2 \le r^2.\label{eq:cmf}
\end{equation}
Using
\begin{equation*}
\|f_{S_m}-f\|^2 =\|f_{S_m}\|^2-\|f\|^2 + 2\langle f,f-f_{S_m}\rangle,
\end{equation*}
and taking into account \eqref{eq:dist1m} and \eqref{eq:cmf}, we obtain
\begin{equation}
\|f_{S_m}-f\|^2\le  r^2  + \frac{\eps - r^2}{2} = \frac{\eps  + r^2}{2} < \eps \qquad \text{ for } S_m \in \mathcal C_m.\label{eq:cpeb}
\end{equation}
Taking into account the subadditivity of outer measure, we derive \eqref{eq:cpe} from \eqref{eq:cpeb} immediately. This completes the proof of Theorem \ref{thm:cvapnikhl}. 
\end{proof}

\begin{remark}\label{rem:cvapnikhl}   
(1) When $A_{S_m}=A$ for $S_m \in \Xx_m$, take $C_2=0$ and
\[
C_1\coloneqq\sqrt{\eps/2}.
\]
Then, for sufficiently small $\gamma_m$,
\begin{equation}
	\mu_m^*\{\|f_{S_m}-f\|^2>\varepsilon\}
	\le
	\mu_m^*
	\left\{
	\|F_{S_m}-F\|>
	\sqrt{\frac{\varepsilon\gamma_m}{2}}
	\right\}.
	\label{eq:vapnik2b}
\end{equation}
Inequality \eqref{eq:vapnik2b} improves the numerical constant in
\cite[Theorem 7.2, p.~298]{Vapnik1998} (for the case $\Mm = E_1$ and $A_{S_m} = A$): its squared threshold is
twice the squared threshold appearing there.

(2) In \cite[Theorem 7.3, p. 299]{Vapnik1998}, Vapnik also considers a method for solving the stochastic ill-posed problem $Af = F$ via a family of ``empirical equations" $A_{S_m} f = F_{S_m}$, using a regularizer $W$ whose sublevel sets $W^{-1}([0, c])$ are compact for all $c \in \mathbb{R}_{\ge 0}$.

(3)   The linearity of the forward operator is not essential for the
compactness argument underlying Theorem \ref{thm:cvapnikhl}.
An analogous result holds for injective weakly sequentially continuous
maps $A:\mathcal M\to E_2$, where weak convergence in $E_1$ implies
metric convergence in $E_2$, provided that the corresponding
regularized functionals admit minimizers. Since all empirical integral
 operators considered below are linear, we omit the
nonlinear formulation.
\end{remark}

\section{Estimating conditional expectation with generalized Vapnik's theorem}\label{sec:estce}

Let $\mathcal{X}$ be a compact smooth Riemannian submanifold in $\mathbb{R}^d$, equipped with the induced Riemannian metric, and let $\mathcal{Y}$ be a measurable space. In this section, we apply Theorem \ref{thm:cvapnikhl} to estimate the regular conditional expectation $r_g^\mu \in \Ll^1(\Xx, \mu_\Xx)$ defined by \eqref{eq:rgmu} via i.i.d. samples $S_m \in \big((\Xx\times \Yy)^m,\mu^m\big)$ where $g$ is known but $\mu$ is unknown.

To estimate the regular conditional expectation $r_g^\mu$, we must restrict our search to a well-behaved hypothesis space of approximations in $\Ll^1(\Xx, \mu_\Xx)$. Let $K: \Xx \times \Xx \to \R$ be a smooth Mercer kernel. We denote by $\Hh_K$ the Reproducing Kernel Hilbert Space (RKHS) associated with $K$. Because $\Xx$ is a compact smooth Riemannian submanifold and $K$ is a smooth kernel, the space $\Hh_K$ consists entirely of smooth functions (see the proof of Theorem D in \cite{CS2001}). Furthermore, $\Hh_K$ embeds continuously into the space of continuous functions  $C(\Xx)$ endowed  with the sup norm  $\|\cdot \|_\infty$. Specifically, the inclusion operator $I_K: \Hh_K \to C(\Xx)_\infty$ is bounded, with its operator norm satisfying \cite[Chapter II]{CS2001}:
\begin{equation}
\Vert I_K\Vert \le \sqrt{C_K}, \qquad C_K \coloneqq \max_{x \in \Xx} K(x, x).\label{eq:ikbounded}
\end{equation}
In the proof of  \cite[Theorem D]{CS2001}  Cucker and Smale show that  
$ I_K =  J_s \circ J^s$ where  $J^s:  \Hh_K \to H^s(\Xx)$ is a continuous   embedding and $J_s:   H^s (\Xx) \to  C(\Xx)_\infty$ is a  compact embedding if $s > \frac{\dim \Xx}{2}$.  By  the Rellich-Kondrachov Theorem for compact manifold with boundary, see, e.g.,  Gilbarg-Trudinger \cite[Theorem 7.26, p. 171]{GT2001}
the embedding $H^s(\Xx) \to C^r(\Xx)$ is compact if $s - \frac{\dim \Xx}{2} > r$.  Thus, choosing $s>r+\dim(\Xx)/2$, we conclude that the
embedding $\Hh_K\hookrightarrow C^r(\Xx)$ is compact.	In particular, for all nonnegative integers $r$ there exists a constant $c_r (\Xx, K)$ such that for any $f \in \Hh_K$ we have, 
\begin{equation}\label{eq:lipsmooth}
	\|f\|_{C^r (\Xx)} \le c_r (\Xx, K)\|f\|_{\Hh_K}. 
\end{equation}

To formulate the estimation of $r_g^\mu$ as a stochastic ill-posed problem without relying on the cumulative distribution function of the joint measure, we utilize the multivariable relative indicator function $I^{(d)}: \R^d \times \R^d \to [0,1]$ defined by:
\begin{equation}
	I^{(d)}(a, x) \coloneqq \prod_{i=1}^d 1_{[0, \infty)}(a_i - x_i).\label{eq:rif}
\end{equation}

\begin{lemma}\label{lem:regillp}
	Let $\Xx\subset\mathbb R^d$ be compact, and let
	$Q\subset\mathbb R^d$ be a compact rectangle such that
	\[
	\Xx\subset\operatorname{int}(Q).
	\]
	Let $\nu$ be the normalized Lebesgue measure on $Q$, and let
	$\mu_\Xx\in\mathcal P(\Xx)$. Define
	\[
	A_{\mu_\Xx}:C(\Xx)_\infty\longrightarrow L^2(Q,\nu)
	\]
	by
	\[
	(A_{\mu_\Xx}f)(a)
	\coloneqq
	\int_\Xx I^{(d)}(a,x)f(x)\,d\mu_\Xx(x),
	\qquad a\in Q.
	\]
	
	\begin{enumerate}
		\item The operator $A_{\mu_\Xx}$ is bounded and
		\[
		\|A_{\mu_\Xx}\|_{\mathrm{op}}\le1.
		\]
		
		\item The operator $A_{\mu_\Xx}:C(\Xx)_\infty\to L^2(Q,\nu)$
		is compact.
		
		\item If, moreover, $\supp(\mu_\Xx)=\Xx$, then
		$A_{\mu_\Xx}$ is injective.
	\end{enumerate}
\end{lemma}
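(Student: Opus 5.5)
Part (1) is the elementary estimate. Since $0\le I^{(d)}(a,x)\le 1$ and $\mu_\Xx$ is a probability measure, for every $a\in Q$ we have $|(A_{\mu_\Xx}f)(a)|\le\int_\Xx|f|\,d\mu_\Xx\le\|f\|_\infty$. Measurability of $a\mapsto(A_{\mu_\Xx}f)(a)$ follows from Fubini, because $(a,x)\mapsto I^{(d)}(a,x)f(x)$ is jointly Borel and bounded. As $\nu$ is a probability measure, $\|A_{\mu_\Xx}f\|_{L^2(Q,\nu)}\le\|f\|_\infty$, so $\|A_{\mu_\Xx}\|_{\mathrm{op}}\le1$.

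For part (2) I would factor $A_{\mu_\Xx}$ through a Hilbert--Schmidt operator. Write $A_{\mu_\Xx}=T\circ\iota$, where $\iota:C(\Xx)_\infty\to L^2(\Xx,\mu_\Xx)$ is the natural bounded map (norm $\le1$), and
\[
T:L^2(\Xx,\mu_\Xx)\to L^2(Q,\nu),\qquad (Th)(a)=\int_\Xx I^{(d)}(a,x)h(x)\,d\mu_\Xx(x).
\]
The kernel $I^{(d)}$ is bounded by $1$, hence lies in $L^2(Q\times\Xx,\nu\otimes\mu_\Xx)$. So $T$ is Hilbert--Schmidt, in particular compact, and the composition with the bounded $\iota$ is compact. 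As a backup not needing $L^2$ theory, I could approximate $I^{(d)}$ in $L^2(\nu\otimes\mu_\Xx)$ by finite sums of products $\phi_k(a)\psi_k(x)$. This gives finite rank approximations converging in operator norm, since $\|A-A_N\|\le\|I^{(d)}-k_N\|_{L^2}$ by Cauchy--Schwarz and $\|f\|_{L^2(\mu_\Xx)}\le\|f\|_\infty$.

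Part (3) is the main step. Suppose $A_{\mu_\Xx}f=0$ in $L^2(Q,\nu)$. Then the function $G(a)=\int_{\{x\le a\}}f\,d\mu_\Xx$ (componentwise order) vanishes for $\nu$-a.e. $a\in Q$. I want to upgrade this to all $a\in\operatorname{int}Q$. $G$ is the distribution function of the finite signed measure $\sigma=f\,\mu_\Xx$, so it is right-continuous in each coordinate jointly: $G(a')\to G(a)$ as $a'\downarrow a$ componentwise, by dominated convergence since $1_{\{x\le a'\}}\to1_{\{x\le a\}}$ pointwise. Any $a\in\operatorname{int}Q$ is such a limit of points in the full-measure zero set, so $G\equiv0$ on $\operatorname{int}Q$.

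Since $\Xx\subset\operatorname{int}Q$, take $q$ with $\Xx\subset\{x: x> q\}$, i.e.\ $q$ a lower corner of a box inside $\operatorname{int}Q$ containing $\Xx$. Then $\sigma$ of every rectangle $(b,a]$ with $q\le b\le a$ in $\operatorname{int}Q$ is zero by inclusion--exclusion over $G$. Such rectangles form a $\pi$-system generating the Borel sets of a box containing $\Xx$, and $\sigma$ is supported in $\Xx$. Applying the $\pi$--$\lambda$ theorem to $\sigma^\pm$ gives $\sigma=0$, i.e.\ $f=0$ $\mu_\Xx$-a.e. Finally, $f$ is continuous and $\supp\mu_\Xx=\Xx$. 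If $f(x_0)\neq0$, a neighborhood $U$ of $x_0$ in $\Xx$ has $|f|>0$ on $U$ and $\mu_\Xx(U)>0$, a contradiction. Hence $f\equiv0$.

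The only delicate points are the a.e.-to-everywhere upgrade, handled by right-continuity, and ensuring $\Xx$ stays away from the boundary of $Q$ so that the rectangles fill a neighborhood of $\Xx$. The latter is exactly why the hypothesis $\Xx\subset\operatorname{int}(Q)$ is imposed.
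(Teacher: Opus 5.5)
Your proof is correct. Parts (1) and (2) coincide with the paper's: the same pointwise bound, and the same factorization $A_{\mu_\Xx}=T\circ J$ through a Hilbert--Schmidt operator on $L^2(\Xx,\mu_\Xx)$. You also check measurability of $a\mapsto(A_{\mu_\Xx}f)(a)$, which the paper leaves implicit.

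In part (3) you follow the paper's overall strategy: view $A_{\mu_\Xx}f$ as the distribution function of $\lambda_f=f\,\mu_\Xx$, show $\lambda_f=0$, then use continuity of $f$ and $\supp\mu_\Xx=\Xx$. What differs is how you recover the measure from its distribution function. The paper argues in $\mathcal D'(\operatorname{int}Q)$. A function vanishing a.e.\ is the zero distribution, and the identity $\partial_1\cdots\partial_d F_{\lambda_f}=\lambda_f$ then forces $\lambda_f|_{\operatorname{int}Q}=0$. In that setting the null set is invisible, but the key identity is quoted rather than proved.

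You instead upgrade ``zero a.e.'' to ``zero on all of $\operatorname{int}Q$'' by joint right-continuity of $G$. This is legitimate: each box $\prod_j(a_j,a_j+1/k)\subset Q$ has positive measure, so it meets the set where $G=0$, and dominated convergence needs no monotonicity. You then finish with inclusion--exclusion on half-open rectangles and the $\pi$--$\lambda$ theorem applied to $f^{\pm}\mu_\Xx$.

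Your route is longer but entirely elementary and self-contained. It also shows exactly where $\Xx\subset\operatorname{int}Q$ is used: there must be room to approach from above inside $Q$, and there must be a box $(q,p]\supset\Xx$ with all corners in $\operatorname{int}Q$. The paper's route is shorter, at the cost of relying on distribution theory for the inversion step.
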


\begin{proof}
	(1)	For $f\in C(\mathcal X)$ and $a\in Q$,
	\[
	|(A_{\mu_{\Xx}}f)(a)|
	\le
	\int_{\Xx}|f(x)|\,d\mu_{\Xx}(x)
	\le \|f\|_\infty.
	\]
	Since $\nu(Q)=1$, it follows that
	\[
	\|A_{\mu_{\Xx}}f\|_{L^2(Q,\nu)}
	\le \|f\|_\infty.
	\]
	This proves Assertion (1).
	
	(2)	For Assertion (2), consider the bounded inclusion
	\[
	J:C(\Xx)_\infty\longrightarrow
	L^2(\Xx,\mu_{\Xx})
	\]
	and the integral operator
	\[
	T:L^2(\Xx,\mu_{\Xx})
	\longrightarrow L^2(Q,\nu),
	\qquad
	(Tg)(a)
	=
	\int_{\Xx} I^{(d)}(a,x)g(x)\,
	d\mu_{\mathcal X}(x).
	\]
	Because the kernel $I^{(d)}$ is bounded,
	\[
	I^{(d)}\in
	L^2(Q\times\Xx,\nu\otimes\mu_{\Xx}),
	\]
	so $T$ is a Hilbert--Schmidt operator and hence compact.
	Since
	\[
	A_{\mu_{\Xx}}=T\circ J,
	\]
	the operator $A_{\mu_{\Xx}}$ is compact.
	
	(3)	For Assertion (3), suppose that
	\begin{equation}
		A_{\mu_{\Xx}}f=0
		\qquad\text{in }L^2(Q,\nu).\label{eq:vanishA}
	\end{equation}
	Define the finite signed Borel measure $\lambda_f$ on $\R^d$ by
	\[
	\lambda_f(B)
	\coloneqq
	\int_{B\cap\Xx}f(x)\,d\mu_{\Xx}(x).
	\]
	Its distribution function is
	\begin{equation}
		F_{\lambda_f}(a)
		=
		\lambda_f\big((-\infty,a]\big)
		=
		(A_{\mu_{\Xx}}f)(a).\label{eq:cdafa}
	\end{equation}

	Since $\nu$ is equivalent to the  restriction of the Lebesgue measure to $Q$,  \eqref{eq:vanishA}  and \eqref{eq:cdafa} imply
	\[
	 F_{\lambda_f}=0
	\qquad\text{Lebesgue-almost everywhere on }Q.
	\]
	In the sense of distributions on $\operatorname{int}Q$,
	\[
	\partial_1\cdots\partial_d  F_{\lambda_f}=\lambda_f.
	\]
	Consequently,
	\[
	\lambda_f|_{\operatorname{int}Q}=0.
	\]
	But $\lambda_f$ is supported on
	$\mathcal X\subset\operatorname{int}Q$, and therefore
	$\lambda_f=0$. Thus $f=0$ $\mu_{\mathcal X}$-almost everywhere.
	
	Finally, because $f$ is continuous and
	$\operatorname{supp}(\mu_{\mathcal X})=\mathcal X$, this implies
	$f=0$ everywhere on $\mathcal X$. Hence
	$A_{\mu_{\mathcal X}}$ is injective.
\end{proof}

By applying the disintegration theorem and Fubini's theorem to the true conditional expectation $r_g^\mu$ defined by \eqref{eq:rgmu}, we obtain:
\begin{align}
(A_{\mu_\mathcal{X}} r_g^\mu)(a) & = \int_{\mathcal{X}} I^{(d)}(a, x) \left( \int_{\mathcal{Y}} g(y) \, d\mu_{\mathcal{Y}\vert{}\mathcal{X}}(y \mid x) \right) d\mu_\mathcal{X}(x)\nonumber\\
& = \int_{\mathcal{X} \times \mathcal{Y}} I^{(d)}(a, x) g(y) \, d\mu(x, y).
\label{eq:amur}
\end{align}
We define the right-hand side of \eqref{eq:amur} as a function of the joint measure $\mu$:
\begin{equation}
B_\mu^g(a) \coloneqq \int_{\mathcal{X} \times \mathcal{Y}} I^{(d)}(a, x) g(y) \, d\mu(x, y), \qquad   a \in Q.\label{eq:bgmu}
\end{equation}
Consequently, the target regression function $r_g^\mu$ is the exact solution to the following operator equation:
\begin{equation}
A_{\mu_\mathcal{X}} r_g^\mu = B_\mu^g.\label{eq:feq}
\end{equation}

\begin{lemma}\label{lem:gl2}  Assume that  $g \in L^2 (\Yy, \mu_\Yy)$. Then  $B_\mu ^g \in  L^2  (Q, \nu)$.
\end{lemma}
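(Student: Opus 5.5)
The plan is to prove a uniform pointwise bound on $B_\mu^g$ over $Q$, measurability in $a$, and then integrate against the probability measure $\nu$.

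First I would check that $B_\mu^g(a)$ is well defined for every $a \in Q$. Since $0 \le I^{(d)}(a,x) \le 1$, we have
\[
|I^{(d)}(a,x)g(y)| \le |g(y)| = |\Pi_\Yy^*(g)(x,y)|.
\]
Moreover $g \in L^2(\Yy,\mu_\Yy) \subset L^1(\Yy,\mu_\Yy)$, because $\mu_\Yy$ is a probability measure. Here I use $\mu_\Yy = (\Pi_\Yy)_*\mu$, so integrals of functions of $y$ against $\mu$ coincide with integrals against $\mu_\Yy$. Hence the integrand in \eqref{eq:bgmu} is $\mu$-integrable. The Cauchy--Schwarz inequality then gives, for every $a \in Q$,
\[
|B_\mu^g(a)| \le \int_{\Xx\times\Yy} |g(y)|\,d\mu(x,y) = \|g\|_{L^1(\Yy,\mu_\Yy)} \le \|g\|_{L^2(\Yy,\mu_\Yy)}.
\]

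The step needing care is measurability of $a \mapsto B_\mu^g(a)$ on $Q$. The function $(a,x,y) \mapsto I^{(d)}(a,x)g(y)$ is jointly Borel measurable on $Q \times \Xx \times \Yy$, since $I^{(d)}$ is the indicator of the closed set $\{a_i \ge x_i \ \forall i\}$. Its absolute value is dominated by $|g(y)|$, which is integrable for $\nu \otimes \mu$. Fubini's theorem therefore shows that $a \mapsto B_\mu^g(a)$ is $\nu$-measurable. As a cross-check, one may split $g = g^+ - g^-$: then $B_\mu^{g^\pm}(a)$ is (up to normalization) the distribution function of the finite measure $g^\pm(y)\,d\mu$ pushed forward to $\Xx$. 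Such a function is monotone in each coordinate and right-continuous, hence Borel.

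Finally, since $\nu(Q) = 1$,
\[
\|B_\mu^g\|_{L^2(Q,\nu)}^2 \le \sup_{a\in Q}|B_\mu^g(a)|^2 \le \|g\|_{L^2(\Yy,\mu_\Yy)}^2 < \infty,
\]
so $B_\mu^g \in L^2(Q,\nu)$. The argument in fact only uses $g \in L^1$, and it shows that $B_\mu^g$ is even bounded. I expect no real obstacle; the only point to state carefully is the joint measurability used for Fubini.
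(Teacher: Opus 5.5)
Your proposal is correct and follows essentially the same route as the paper: the uniform bound $|B_\mu^g(a)|\le\|g\|_{L^1(\Yy,\mu_\Yy)}\le\|g\|_{L^2(\Yy,\mu_\Yy)}$, obtained from $0\le I^{(d)}\le 1$ and H\"older (Cauchy--Schwarz) on a probability space, together with $\nu(Q)=1$. Your Fubini argument for the measurability of $a\mapsto B_\mu^g(a)$ makes explicit a point that the paper leaves implicit.
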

\begin{proof}
 Since $0 \le I^{(d)}(a, x) \le 1$, we have:
\begin{equation}
\vert B_\mu^g(a) \vert \le \int_{\mathcal{X} \times \mathcal{Y}} \vert I^{(d)}(a, x) \vert \vert g(y)\vert \, d\mu(x, y) \le \int_{\mathcal{X} \times \mathcal{Y}} \vert g(y)\vert \, d\mu(x, y). \label{eq:bgbounded}
\end{equation}
Because $\mu_\Yy \in \Pp (\Yy)$, Hölder's inequality  guarantees that $L^2(\mathcal{Y}, \mu_\mathcal{Y}) \subset L^1(\mathcal{Y}, \mu_\mathcal{Y})$. Taking into account \eqref{eq:bgbounded},
we obtain
\begin{equation}
	 \vert B_\mu^g(a) \vert \le \Vert g \Vert_{L^1(\mu_\mathcal{Y})} \le \Vert g \Vert_{L^2(\mu_\mathcal{Y})} < \infty. \label{eq:bgboundedl2}
\end{equation}
From \eqref{eq:bgboundedl2} we conclude that $B_\mu^g \in L^2(Q, \nu)$.
\end{proof}

 Because the operator $A_{\mu_\mathcal{X}}$ is compact on $\mathcal{H}_K$, the equation $A_{\mu_\mathcal{X}} r = B_\mu^g$ is ill-posed if $\Hh_K$ is infinite dimensional. To solve  this equation, using empirical data $S_m \in (\Xx\times \Yy)^m$, we  assume that $r_g^\mu \in \mathcal{H}_K$ and apply Theorem \ref{thm:cvapnikhl}.  

Given an i.i.d. sample $S_m = \{(x_1, y_1), \dots, (x_m, y_m)\} \in (\Xx \times \Yy)^m$,  we set  for $a \in Q$
\begin{equation}
(A_{S_m} f)(a) \coloneqq \frac{1}{m} \sum_{i=1}^m I^{(d)}(a, x_i) f(x_i), \label{eq:empA}
\end{equation}
\begin{equation}
B_{S_m}^g(a) \coloneqq \frac{1}{m} \sum_{i=1}^m I^{(d)}(a, x_i) g(y_i). \label{eq:empB}
\end{equation}

We consider the statistical learning model $(\Zz, \mathcal{H}_K, R, \Pp_K(\Zz))$, where $\Zz \coloneqq \Xx \times \Yy$, $\Pp_K(\Zz)$ consists of all $\mu \in \Pp (\Zz)$  satisfying  the conditions (i) and (ii)  below:\\
(i) $\text{supp}(\mu_\mathcal{X}) = \mathcal{X}$;\\
(ii) The conditional expectation  admits a version $r_g^\mu \in \mathcal{H}_K$; \\
  and  the expected loss function $R: \mathcal{H}_K \times \Pp_K (\Zz) \to \mathbb{R}_{\ge 0}$ is defined as:
\begin{equation}
	R(f, \mu) \coloneqq \Vert A_{\mu_\mathcal{X}} f - B_\mu^g\Vert_{L^2(Q, \nu)}^2. \label{eq:truerisk}
\end{equation}

Given a regularization parameter $\gamma_m > 0$, we define the regularized empirical risk function $R_{\gamma_m}(\cdot, B^g_{S_m}, A_{S_m}): \mathcal{H}_K \to \mathbb{R}$ by:
\begin{equation}
	R_{\gamma_m}(f, B_{S_m}^g, A_{S_m}) \coloneqq \Vert A_{S_m} f - B_{S_m}^g\Vert_{L^2(Q, \nu)}^2 + \gamma_m \Vert f \Vert_{\mathcal{H}_K}^2. \label{eq:regrisk}
\end{equation}
Let $f_{S_m}$ be a minimizer of this risk. By the lower semicontinuity of the regularizer and the properties of $\mathcal{H}_K$, such a minimizer exists uniquely. We now state the generalized Vapnik theorem for the consistency of this learning algorithm.  

\begin{theorem}\label{thm:genvapnik}
	Assume the conditions of the statistical learning model $(\mathcal{Z}, \mathcal{H}_K,\\ R, \mathcal{P}_K(\mathcal{Z}))$ above. Let  $\sup_{\mu \in \mathcal{P}_K(\mathcal{Z})} \mathbb{E}_{\mu_\mathcal{Y}}[|g|^2] \le M < \infty$. 
	
	Let $\{\gamma_m \in \mathbb{R}^{+}, m \in \mathbb{N}^+\}$ be a sequence of regularization parameters,  such that there  exists a  sequence  of  $\beta_m \in \R^+$ with the following properties 
	\begin{equation}
		\lim_{m \to \infty} \gamma_m = 0, \qquad \lim _{m \to \infty}\beta_m\sqrt{\gamma_m}  =\infty,  \quad \lim_{m \to \infty} \frac{m\gamma_m}{\beta^2_m\log m} = +\infty. \label{eq:gamma_rate}
	\end{equation}
	Then for any $\mu \in \mathcal{P}_K(\mathcal{Z})$ and $\eps > 0$, 
	\begin{equation}
	\lim_{m\to \infty}	(\mu^m)^*\big\{ S_m \in \mathcal{Z}^m :\|f_{S_m} - r^\mu_g\|_{\Hh_K}\ge \eps \big\} = 0. \label{eq:vapnik_bound}
	\end{equation}
\end{theorem}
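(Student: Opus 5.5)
We apply Theorem \ref{thm:cvapnikhl} with $E_1=\Hh_K$, $E_2=L^2(Q,\nu)$, $\Mm=\Hh_K$ (nonempty, closed, convex), $A=A_{\mu_\Xx}\circ I_K$, $f=r_g^\mu$, $F=B^g_\mu$, $\Xx_m=\Zz^m$ with $\mu_m=\mu^m$, $F_{S_m}=B^g_{S_m}$ and $A_{S_m}$ as in \eqref{eq:empA} composed with $I_K$. By \eqref{eq:ikbounded} and Lemma \ref{lem:regillp}(1), $A$ is bounded. By condition (i) and Lemma \ref{lem:regillp}(3), $A$ is injective on $\Hh_K$. Also $A_{S_m}$ is bounded, since $\|A_{S_m}h\|\le\|h\|_\infty\le\sqrt{C_K}\|h\|_{\Hh_K}$. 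Equation \eqref{eq:feq} gives $Af=F$, and Lemma \ref{lem:gl2} gives $F\in E_2$. Given $\eps>0$, choose fixed $C_1,C_2>0$ with $(C_1+C_2\|r^\mu_g\|)^2<\eps^2$. Since $\gamma_m\to0$, eventually $\gamma_m\le\gamma_0$. Then \eqref{eq:cpe}, applied with $\eps^2$, reduces \eqref{eq:vapnik_bound} to showing that
\[
(\mu^m)^*\{\|B^g_{S_m}-B^g_\mu\|>C_1\sqrt{\gamma_m}\}\to0,\qquad (\mu^m)^*\{\|A_{S_m}-A\|_{\mathrm{op}}>C_2\sqrt{\gamma_m}\}\to0 .
\]

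\emph{The $B$-term.} For fixed $a$, $B^g_{S_m}(a)$ is an empirical mean of the i.i.d. variables $\xi_i(a)=I^{(d)}(a,x_i)g(y_i)$. These have mean $B^g_\mu(a)$ and second moment at most $M$. Treat $\xi_i$ as i.i.d. random elements of the Hilbert space $L^2(Q,\nu)$ with $\E\|\xi_i\|^2\le M$. Then
\[
\E\|B^g_{S_m}-B^g_\mu\|^2\le M/m .
\]
Chebyshev's inequality bounds the probability by $M/(mC_1^2\gamma_m)$. This tends to $0$ because \eqref{eq:gamma_rate} gives $m\gamma_m/(\beta_m^2\log m)\to\infty$, and $\beta_m\sqrt{\gamma_m}\to\infty$ together with $\gamma_m\to0$ forces $\beta_m\to\infty$.

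\emph{The $A$-term (main obstacle).} This is a uniform operator-norm bound over the unit ball of $\Hh_K$. The plan is to use \eqref{eq:lipsmooth}: the unit ball sits inside a bounded set of $C^r(\Xx)$ with $r$ large. For each $a$,
\[
(A_{S_m}h-Ah)(a)=\frac1m\sum_i\phi_{a,h}(x_i)-\int\phi_{a,h}\,d\mu_\Xx ,\qquad \phi_{a,h}(x)=I^{(d)}(a,x)h(x),
\]
which is the integral of the smooth function $h$ against the difference between the empirical measure and $\mu_\Xx$ on the box $[-\infty,a]\cap\Xx$. A Koksma--Hlawka type inequality (Lemma \ref{lem:HK} of the appendix, via Aistleitner--Dick) should give
\[
\sup_a|(A_{S_m}h-Ah)(a)|\le c\,\|h\|_{C^d}\,D_m ,
\]
where $c$ depends on $\Xx$, $d$ and the extension of $h$ to $Q$, and $D_m$ is the star discrepancy $\sup_a|\hat\mu_{S_m}([-\infty,a])-\mu_\Xx([-\infty,a])|$ between the empirical and true distribution functions. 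Using \eqref{eq:lipsmooth}, this yields
\[
\|A_{S_m}-A\|_{\mathrm{op}}\le c'\,D_m .
\]
The multivariate DKW inequality, or a VC bound for the class of orthants (VC dimension $d$), gives
\[
\mu^m\{D_m>t\}\le C(d)\,m^{d}e^{-2mt^2}.
\]
Taking $t=C_2\sqrt{\gamma_m}/c'$, the exponent is $-2mC_2^2\gamma_m/c'^2$. By the third condition in \eqref{eq:gamma_rate}, $m\gamma_m\gg\beta_m^2\log m$, and since $\beta_m\to\infty$ this dominates $d\log m$, so the bound tends to $0$.

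The delicate point is proving the Koksma--Hlawka step with constants uniform in $a$. One has to extend $h$ smoothly from the submanifold $\Xx$ to $Q$ with $C^d$ control, and handle the truncation by $I^{(d)}(a,\cdot)$. The Hardy--Krause variation of $1_{[-\infty,a]}h$ is controlled by derivatives of the extension on sub-boxes, which is exactly the content of the generalized inequality of \cite{AD2015}. The sequence $\beta_m$ is meant to absorb these constants and the logarithmic factors.
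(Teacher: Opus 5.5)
Your proof is correct. For the operator term it follows the paper: Lemma \ref{lem:HK} reduces $\|A_{S_m}-A_{\mu_\Xx}\|_{\mathrm{op}}$ to the star discrepancy, which is then controlled by an exponential VC bound for lower orthants. For the $B$-term, however, you take a genuinely different and simpler route. You use only the second moment, $\E\|B^g_{S_m}-B^g_\mu\|^2_{L^2(\nu)}\le M/m$, followed by Chebyshev, which gives $M/(C_1^2m\gamma_m)$. Writing this expectation as $\int_Q\mathrm{Var}(B^g_{S_m}(a))\,d\nu(a)$ via Tonelli makes it rigorous without any Bochner-measurability discussion. The paper instead truncates $g$ at level $\beta_m$. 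It bounds the tail contributions by Markov's inequality and a deterministic indicator, and the bounded part by symmetrization, Hoeffding and Sauer--Shelah, see \eqref{eq:markov_empi}--\eqref{eq:final_assembly}.

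Your route buys three things. First, it needs only $m\gamma_m\to\infty$ for this term. Second, it yields a smaller bound: eventually $\beta_m^2\log m\le m\gamma_m$, so the paper's Markov term $3M/(C_1\beta_m\sqrt{\gamma_m})$ is at least of order $\sqrt{\log m}/(\sqrt m\,\gamma_m)$, a factor of order $\sqrt{m\log m}$ above your $1/(m\gamma_m)$. Third, it shows the auxiliary sequence $\beta_m$ is dispensable. The only rate condition your argument really uses is $m\gamma_m/\log m\to\infty$, needed to beat the polynomial prefactor in the operator term. Consequently your closing remark misdescribes $\beta_m$: in the paper it is the truncation level for the unbounded $g$, not a device for absorbing constants.

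What the paper's route buys is sup-in-$a\in Q$ control of the truncated deviation and a uniform VC-theoretic treatment of both terms. Neither is needed, since Theorem \ref{thm:cvapnikhl} only involves the $L^2(Q,\nu)$ norm. Your bound is also uniform over all $\mu$ with $\E_{\mu_\Yy}|g|^2\le M$, so it would serve equally well in Theorem \ref{thm:uniconsistency}.

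One cosmetic point: \eqref{eq:cpe} applied with $\eps^2$ controls $\{\|f_{S_m}-r^\mu_g\|>\eps\}$, while \eqref{eq:vapnik_bound} concerns $\{\|f_{S_m}-r^\mu_g\|\ge\eps\}$. Apply it with any $\eps'<\eps$, or use the strict inequality in \eqref{eq:cpeb}.
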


\begin{proof}
	The proof proceeds in four steps: verifying the prerequisites of Theorem \ref{thm:cvapnikhl}, applying its deviation bound \eqref{eq:cpe}, handling the unbounded target $g$ via truncation, and establishing  the consistency \eqref{eq:vapnik_bound}. 
	
	\underline{Step 1.} {\it Verification of Theorem \ref{thm:cvapnikhl} Conditions.}
	 Let $E_1 \coloneqq \mathcal{H}_K$, which is a Hilbert space, and $E_2 \coloneqq L^2(Q, \nu)$, which is a normed vector space. We define $\mathcal{M} \coloneqq \mathcal{H}_K$, which is trivially nonempty, norm-closed, and convex. 
	
	The operator $A \coloneqq A_{\mu_\mathcal{X}}: \mathcal{H}_K \to L^2(Q, \nu)$ is bounded. To see this, note that by \eqref{eq:ikbounded} the inclusion $I_K: \mathcal{H}_K \hookrightarrow C(\mathcal{X})_\infty$ is bounded (with norm $\le \sqrt{C_K}$), and by Lemma \ref{lem:regillp}(1), $A_{\mu_\mathcal{X}}: C(\mathcal{X}) \to L^2(Q, \nu)$ is continuous with operator norm less than or  equal to 1. Hence, $A_{\mu_\mathcal{X}} \in \mathcal{L}(E_1, E_2)$. By Lemma \ref{lem:regillp}(3), because $\text{supp}(\mu_\mathcal{X}) = \mathcal{X}$, the restriction of $A_{\mu_\mathcal{X}}$ to $\mathcal{M}$ is injective. Finally   we let $(\Xx_m, \mu_m) \coloneqq (\Zz^m, \mu^m)$. For every $S_m\in\Zz^m$, Lemma \ref{lem:regillp}(1), applied
	to the empirical marginal
	\[
	\mu_{S_m,\Xx}=\frac1m\sum_{i=1}^m\delta_{x_i},
	\]
	together with \eqref{eq:ikbounded}, shows that
	$A_{S_m}\in\mathcal L(\Hh_K,L^2(Q,\nu))$ and
	$\|A_{S_m}\|_{\mathrm{op}}\le\sqrt{C_K}$.  Since  for any $a \in Q$
	$$|B^g _{S_m}  (a) | \le \frac{1}{m}\sum_{i=1}^m |g(y_i)| <\infty $$
	we conclude that  $B^g_{S_m}\in  L^2 (Q,\nu)$
	
	Setting the target $F \coloneqq B_\mu^g$ and noting that the true minimizer $f \coloneqq r_g^\mu \in \mathcal{M}$ satisfies $A_{\mu_\mathcal{X}} r_g^\mu = B_\mu^g$, all  structural conditions of Theorem \ref{thm:cvapnikhl} are satisfied.
	
	\underline{Step 2.} {\it Application of the  deviation  bound \eqref{eq:cpe}.}  Choose $C_1,C_2>0$ such that
	\[
	(C_1+C_2\|r_g^\mu\|_{\Hh_K})^2<\eps.
	\]
	By  Theorem \ref{thm:cvapnikhl}  there exists $\gamma_0 > 0$ such that for any $\gamma_m \le \gamma_0$ we have
	\begin{align}
		(\mu^m)^*\big( \Vert f_{S_m} - r_g^\mu \Vert_{\mathcal{H}_K}^2& > \eps \big) \le (\mu^m)^*\big( \Vert B_{S_m}^g - B_\mu^g \Vert_{L^2} > C_1 \sqrt{\gamma_m} \big)\nonumber\\
		& + (\mu^m)^*\big( \Vert A_{S_m} - A_{\mu_\mathcal{X}} \Vert_{\text{op}} > C_2 \sqrt{\gamma_m} \big), \label{eq:master_decomp}
	\end{align}
	where $(C_1 + C_2 \Vert r_g^\mu \Vert_{\mathcal{H}_K})^2 < \eps$.
	
	\underline{Step 3.} {\it Bounding  the  RHS of \eqref{eq:master_decomp} with truncation.}
	Recall that
	$$\Vert A_{S_m} - A_{\mu_\mathcal{X}} \Vert_{\text{op}} = \sup_{\Vert h \Vert_{\mathcal{H}_K} \le 1} \Vert A_{S_m} h - A_{\mu_\mathcal{X}} h \Vert_{L^2(Q, \nu)}.$$
	 
	Because $\nu \in \Pp (Q)$ (and thus $\nu(Q) = 1$),  we have  
	\begin{align}\Vert A_{S_m} h - A_{\mu_\mathcal{X}} h \Vert_{L^2(Q, \nu)} &\le \sup_{a \in Q} \vert{} (A_{S_m} h)(a) - (A_{\mu_\mathcal{X}} h)(a) \vert{}\nonumber \\ 
	&= \sup_{a \in Q} \left\vert{} \int_{\mathcal{X}} I^{(d)}(a, x) h(x) \, d(\mu_{S_m^\Xx} - \mu_\Xx)(x) \right\vert{}.\label{eq:bound1}
	\end{align}

Here  for $S_m = \big((x_1, y_1), \ldots,  (x_m, y_m)\big)$  we denote by  $S_m^\Xx\coloneqq  (x_1, \ldots, x_m)$   the  $\Xx$-component of $S_m$.

\begin{lemma}\label{lem:HK} There  exists  a  constant  $C (K, Q) $  depending only on   the kernel $K$, the submanifold $\Xx$, and the compact rectangle $Q\supset \Xx$ (we suppress the dependence on $\Xx$ in the notation)  such that  for all  $h \in \Hh_K$ with $\| h \|_{\Hh_K}\le 1 $  and every $a \in Q$ we have
	\begin{equation}
	\left\vert{} \int_{\Xx} I^{(d)}(a,\cdot) h \, d(\mu_{S_m^\Xx} - \mu_\Xx) \right\vert{} \le C(K, Q) \sup_{x \in Q} \left\vert{} (\E_{\mu_{S_m^\Xx}} - \E_{\mu_\Xx})\big(I^{(d)}(x, \cdot)\big) \right\vert{}.\label{eq:HK}
\end{equation}
\end{lemma}
 We postpone the proof of Lemma  \ref{lem:HK} to Appendix \ref{sec:HK}. 
	 Taking into account \eqref{eq:bound1}, we obtain  from \eqref{eq:HK}
	 \begin{equation}
	 \Vert A_{S_m} - A_{\mu_\Xx} \Vert_{\text{op}} \le C(K, Q) \sup_{f \in \mathcal{F}} \vert{}\E_{\mu_{S_m^\Xx}} f - \E_{\mu_\Xx} f\vert{},\label{eq:bound2}
	 \end{equation}
	 where 
	 $$\mathcal{F} \coloneqq \{I^{(d)}(a, \cdot) : a \in Q\}\subset \{0, 1\}^\Xx.$$
	 
	 From  \eqref{eq:bound2} we obtain immediately
	 \begin{align}(\mu^m) ^*\left(S_m\in \Zz^m:  \Vert A_{S_m} - A_{\mu_\mathcal{X}} \Vert_{\text{op}} > C_2 \sqrt{\gamma_m} \right)\nonumber\\  
	  \le (\mu^m_\Xx)^*\left(S_m\in \Xx^m: \sup_{f \in \mathcal{F}} \vert{}\E_{\mu_{S_m}} f - \E_{\mu_\Xx} f\vert{} > \frac{C_2 \sqrt{\gamma_m}}{C(K, Q)} \right).\label{eq:bound3a}
	 \end{align}
	 For notational simplicity, let $\epsilon_m \coloneqq \frac{C_2 \sqrt{\gamma_m}}{C(K, Q)}$.

	 A class $\Hh$ of functions on $\Xx$ is called \emph{sequentially
	 	pointwise separable}  if there exists a countable subclass
	 $\Hh_0\subset\Hh$ such that, for every $h\in\Hh$, there is a sequence
	 $(h_k)\subset\Hh_0$ satisfying \footnote{
	 	This property is called pointwise measurability in
	 	\cite[p. 110]{VW1996}.}
	 \[
	 h_k(x)\longrightarrow h(x)
	 \qquad\text{for every }x\in\Xx.
	 \]

	 \begin{lemma}\label{lem:pm_measurability}
	 	The function class $\mathcal{F} \coloneqq \{I^{(d)}(a, \cdot) : a \in Q\}$ is  sequentially pointwise separable. Consequently, the mapping 
	 	$$ (S_{2m}, \sigma) \mapsto \sup_{f \in \mathcal{F}} \left\vert \frac{1}{m} \sum_{i=1}^m \sigma_i (f(x_i) - f(x_i')) \right\vert $$
	 	is measurable with respect to the product $\sigma$-algebra of $\Xx^{2m} \times \{ \pm 1\}^m$.
	 \end{lemma}
	 \begin{proof}[Proof of Lemma \ref{lem:pm_measurability}] 
	 Write $Q = \prod_{j=1}^d [\alpha_j, \beta_j]$. For any $a \in Q$, we construct a sequence $q_k \in \Q^d \cap Q$ as follows. Since $\Xx$ is compact and $\Xx \subset \operatorname{int}(Q)$, we have $s_j \coloneqq \sup_{x \in \Xx} x_j < \beta_j$ for every $j$. If $a_j = \beta_j$, choose any rational $q_{k, j} \in (s_j, \beta_j)$, so that $1_{[0, \infty)}(q_{k, j} - x_j) = 1 = 1_{[0, \infty)}(a_j - x_j)$ for all $x \in \Xx$; if $a_j < \beta_j$, choose rationals $q_{k, j} \in (a_j, \beta_j)$ strictly decreasing to $a_j$. By the right-continuity of $I^{(d)}(\cdot, x)$ in each coordinate, $I^{(d)}(q_k, x) \to I^{(d)}(a, x)$ for all $x \in \Xx$.
	  	Therefore, the supremum over  $\mathcal{F}$ is  equal to the supremum over $\mathcal{G}\coloneqq \{ I^{(d)} (q, \cdot),  q\in \Q^d \cap Q\}\subset \Ff$. Hence, the mapping is measurable with respect to the product $\sigma$-algebra, completing the proof.
	 \end{proof}
	
	 \begin{lemma}\label{lem:bound3}  We have
	 	\begin{equation}
	 		(\mu^m_\Xx)^* \Big ( S_m:  \sup_{f\in \Ff}|\E_{\mu_{S_m}}f -\E_{\mu_\Xx} f|  > \frac{C_2\sqrt{\gamma_m}}{C(K, Q)} \Big )\le  4(2m+1) ^d  \exp \Big(-\frac{m C_2 ^2\gamma_m}{8C(K, Q) ^2}\Big).\label{eq:bound3}
	 	\end{equation}
	 \end{lemma}
	 \begin{proof}[Proof of Lemma \ref{lem:bound3}]  The proof  proceeds in four standard steps.
	 	 
	 \underline{Step 1.} {\it Symmetrization.} 
	 We introduce a ``ghost sample" $S_m' = (x_1', \dots, x_m')$ drawn independently from $\mu^m_\Xx$, and  consider Rademacher  variables $\sigma = (\sigma_1, \dots, \sigma_m) \in (\{-1, 1\})^m$   endowed  with the uniform probability measure $\Uu_{\Z_2^m}\in \Pp (\{\pm 1\}^m)$. 	 
	 By Lemma \ref{lem:pm_measurability}, the class $\mathcal{F}$ is sequentially pointwise separable. Because $\Ff$ is uniformly bounded, the Dominated Convergence Theorem ensures that both the empirical expectation $\mathbb{E}_{\mu_{S_m}} f$ and the true expectation $\mathbb{E}_{\mu_\Xx} f$ respect these pointwise limits. Consequently,
	 $$ \sup_{f \in \mathcal{F}} |\mathbb{E}_{\mu_{S_m}} f - \mathbb{E}_{\mu_\Xx} f| = \sup_{f \in \Gg} |\mathbb{E}_{\mu_{S_m}} f - \mathbb{E}_{\mu_\Xx} f|. $$
	Hence, the event 
	 $$ \mathcal{A} \coloneqq \left\{ S_m \in \Xx^m : \sup_{f \in \mathcal{F}} |\mathbb{E}_{\mu_{S_m}} f - \mathbb{E}_{\mu_\Xx} f| > \epsilon_m \right\} $$
	 is measurable.
	
	 If $\Aa$ occurs, there exists some function $f^* \in \mathcal{F}$ (depending on $S_m\in \Aa$) such that 
	 \begin{equation}  
	 	|\E_{\mu_{S_m}} f^* - \E_{\mu_\Xx} f^*| > \epsilon_m. \label{eq:eps}
	 \end{equation}
	 Since $f^* \in \Ff$,  for any $x\in \Xx$
	 \begin{equation} 
	 	f^*(x) \in \{0, 1\} \implies \mathrm{Var}_{\mu_\Xx}(f^*) = \E_{\mu_\Xx}(f^*) - (\E_{\mu_\Xx} f^*)^2 \le \max_{p \in [0,1]} p(1-p) = \frac{1}{4}. \label{eq:varest}
	 \end{equation}
	 
	 Assuming $m\epsilon_m^2 \ge 2$\footnote{If $m\epsilon_m^2 < 2$, the right-hand side of \eqref{eq:bound3} exceeds $1$ and the bound holds trivially; the same convention applies to the assumption $m\tau^2 \ge 8\beta_m^2$ below.} and fixing a sample $S_m \in \mathcal{A}$, Chebyshev's inequality and the bound \eqref{eq:varest} yield the following for the ghost sample $S_m'$:
	 $$ \mu^m_\Xx \left( S_m' \in \Xx^m : |\E_{\mu_{S_m'}} f^* - \E_{\mu_\Xx} f^*| > \frac{\epsilon_m}{2} \right) \le \frac{\mathrm{Var}_\mu(f^*)}{m(\epsilon_m/2)^2} \le \frac{1/4}{m\epsilon_m^2 / 4} = \frac{1}{m\epsilon_m^2} \le \frac{1}{2}. $$
	
	 Consequently, the complementary event $\mathcal{A}_{f^*} \subset \Xx^m$ satisfies:
	 \begin{equation} 
	 	\mu^m_\Xx \left( \mathcal{A}_{f^*} \coloneqq \left\{ S_m' \in \Xx^m : |\E_{\mu_{S_m'}} f^* - \E_{\mu_\Xx} f^*| \le \frac{\epsilon_m}{2} \right\} \right) \ge \frac{1}{2}. \label{eq:complementary}
	 \end{equation} 
	 
	 For any $S_m' \in \mathcal{A}_{f^*}$, taking into account that our fixed $S_m$ satisfies \eqref{eq:eps}, the triangle inequality implies that:
	 \begin{equation} |\E_{\mu_{S_m}} f^* - \E_{ \mu_{S_m'} }f^*| \ge |\E_{\mu_{S_m}} f^* - \E_{\mu_\Xx} f^*| - |\E_{\mu_{S_m'}} f^* - \E_{\mu_\Xx} f^*| > \epsilon_m - \frac{\epsilon_m}{2} = \frac{\epsilon_m}{2}. \label{eq:complementaryc}
	 \end{equation}
	 
	Since $f^* \in \Ff$, we obtain:
	 \begin{equation}
	 	\sup_{f \in \mathcal{F}} |\E_{\mu_{S_m}} f - \E_{\mu_{S_m'}} f| \ge |\E_{\mu_{S_m}} f^* - \E_{\mu_{S_m'}} f^*| > \frac{\epsilon_m}{2}. \label{eq:supest}
	 \end{equation}
	 
	 Since this strict inequality holds for every $S_m' \in \mathcal{A}_{f^*}$, \eqref{eq:complementaryc}  implies that  the event of all $S_m'$ satisfying  \eqref{eq:supest},  given $S_m$,  is a superset of $\mathcal{A}_{f^*}$. Using \eqref{eq:complementary}, we establish a pointwise inequality for our fixed $S_m \in \mathcal{A}$:
	 \begin{equation}
	 	\mu^m_\Xx \left( S_m' \in \Xx^m : \sup_{f \in \mathcal{F}} |\E_{\mu_{S_m}}f - \E_{\mu_{S_m'}} f| > \frac{\epsilon_m}{2} \right) \ge \frac{1}{2}. \label{eq:probsupest}
	 \end{equation}
	 
	 Integrating Inequality \eqref{eq:probsupest} with respect to $\mu^m_\Xx$ over the measurable set $\mathcal{A}$  gives:
	 \begin{align}
	 	\frac{1}{2}\mu^m_\Xx(\mathcal{A}) &= \int_{\mathcal{A}} \frac{1}{2} \, d\mu^m_\Xx(S_m) \nonumber \\
	 	&\stackrel{\eqref{eq:probsupest}}{\le} \int_{\mathcal{A}} \mu^m_\Xx \left( S_m' \in \Xx^m : \sup_{f \in \mathcal{F}} |\E_{\mu_{S_m}} f -\E_{\mu_{S_m'}} f| > \frac{\epsilon_m}{2} \right) d\mu^m_\Xx(S_m) \nonumber \\
	 	&\le \mu^{2m}_\Xx \left( S_m, S_m': \sup_{f \in \mathcal{F}} |\E_{\mu_{S_m}} f - \E_{\mu_{S_m'}} f| > \frac{\epsilon_m}{2} \right). \label{eq:symm}
	 \end{align}

Hence,
	 \begin{equation}
	 	\mu^m_\Xx(\mathcal{A}) \le 2 (\mu^{2m}_\Xx \otimes \Uu_{\Z^m_2})\left( \sup_{f \in \mathcal{F}} \left\vert \frac{1}{m} \sum_{i=1}^m \sigma_i (f(x_i) - f(x_i')) \right\vert > \frac{\epsilon_m}{2} \right). \label{eq:symm1}
	 \end{equation}

	 \underline{Step 2.} {\it Conditioning and Hoeffding's Inequality.}
	 
	 For a fixed function $f \in \mathcal{F}$, define  $c_i (x_i, x_i') \coloneqq f(x_i) - f(x_i')$. Because  $f(x) \in \{0, 1\}$,  we have $c_i(x_i, x_i') \in \{-1, 0, 1\}$. Consequently, $c_i^2 (x_i, x_i') \le 1$. The sum $\frac{1}{m} \sum_{i=1}^m \sigma_i c_i$ is a sum of independent, zero-mean measurable functions. Applying Hoeffding's inequality for Rademacher sums yields:
	 $$\Uu_{\Z^m_2}\left( \left\vert \frac{1}{m} \sum_{i=1}^m \sigma_i c_i \right\vert > \frac{\epsilon_m}{2} \right) \le 2 \exp\left( - \frac{(\epsilon_m/2)^2}{2 \sum_{i=1}^m (c_i/m)^2} \right).$$
	 
	 Because $\sum_{i=1}^m (c_i/m)^2 = \frac{1}{m^2} \sum_{i=1}^m c_i^2 \le \frac{m}{m^2} = \frac{1}{m}$, the bound simplifies exactly to:
	 \begin{equation}
	 	\Uu_{\Z^m_2}\left( \left\vert \frac{1}{m} \sum_{i=1}^m \sigma_i (f(x_i) - f(x_i')) \right\vert > \frac{\epsilon_m}{2} \right) \le 2 \exp\left( - \frac{m\epsilon_m^2}{8} \right). \label{eq:hoeffding}
	 \end{equation}

	\underline{Step 3.} {\it The Union Bound over the Shatter Function.} The supremum in \eqref{eq:symm1} evaluates the class $\mathcal{F}$ over the fixed combined sample $S_{2m}= ( x_1, \ldots, x_m, x_1',\ldots, x_m')$. The number of distinct binary evaluation vectors that $\mathcal{F}$ can produce on $2m$ points is given by the shatter function $\Delta_{2m}(\mathcal{F}, S_{2m})$. Taking the union bound over all distinct projections on the sample, we obtain:
	\begin{equation}	\Uu_{\Z^m_2}\left( \sup_{f \in \mathcal{F}} \left\vert \frac{1}{m} \sum_{i=1}^m \sigma_i (f(x_i) - f(x_i')) \right\vert > \frac{\epsilon_m}{2} \right) \le 2 \Delta_{2m}(\mathcal{F}, S_{2m}) \exp\left( - \frac{m \epsilon_m^2}{8} \right). \label{eq:unionbound}
	\end{equation}
	
	\underline{Step 4.} {\it Applying the VC-Index Bounds.} The class $\mathcal{F} = \{I^{(d)}(a, \cdot) : a \in Q\}$ corresponds to the collection of lower-left orthants, which forms a VC-class of index $V = d+1$ \cite[Example 2.6.1]{VW1996}. By the Sauer-Shelah lemma,\footnote{The combinatorial bound on the shatter function, frequently referred to as the Sauer-Shelah lemma, was discovered independently by Vapnik and Chervonenkis \cite{VC1971}, Sauer \cite{Sauer1972}, and Shelah \cite{Shelah1972}.} the shatter function on $2m$ points is bounded by a polynomial of degree $V-1$:
	\begin{equation}
		\max_{S_{2m}\in \Xx^{2m}} \Delta_{2m}(\mathcal{F}, S_{2m}) \le (2m+1)^d.\label{eq:vcindex}
	\end{equation}
	Substituting \eqref{eq:vcindex} into \eqref{eq:unionbound},  and taking the expectation over $S_{2m}$ leaves the bound unchanged. Finally, multiplying by the leading constant $2$ from the symmetrization step \eqref{eq:symm1} yields the  bound:
	\begin{align*}
		(\mu^m_\Xx)\Big(S_m: \sup_{f \in \mathcal{F}} \vert\E_{\mu_{S_m}} f - \E_{\mu_\Xx} f\vert > \epsilon_m \Big) &\le 2 \times 2(2m+1)^d \exp\left( - \frac{m \epsilon_m^2}{8} \right)\\
		& = 4(2m+1)^d \exp\left( - \frac{m \epsilon_m^2}{8} \right).
	\end{align*}
	Setting the threshold $\epsilon_m \coloneqq \frac{C_2 \sqrt{\gamma_m}}{C(K, Q)}$ yields the required exponential bound \eqref{eq:bound3}, completing the proof of Lemma \ref{lem:bound3}.

	\end{proof}

	{\it Completion of the proof of Theorem \ref{thm:genvapnik}.}  From   \eqref{eq:bound3a}  and Lemma \ref{lem:bound3}   we obtain the bound for the second term in the RHS of \eqref{eq:master_decomp}
	\begin{equation}\label{eq:masterb2}
		(\mu ^m)^*( S_m \in \Zz^m:\| A_{S_m}- A_{\mu_\Xx}\|_{\mathrm{op}}>  C_2\sqrt{\gamma_m} )\le   4 (2m+1)  ^d \exp  \big (  - \frac{ m C_2 ^2  \gamma_m}{ 8 C(K, Q)^2}\big ).
	\end{equation}
	It remains to bound the first term in the RHS of \eqref{eq:master_decomp}. Recall that $S_m \in \mathcal{Z}^m$. To bound $\Vert B_{S_m}^g - B_\mu^g \Vert_{L^2(\nu)}$ for an unbounded target $g$, we employ a truncation argument standard in the nonparametric regression literature (see, e.g., Gy\"orfi et al. \cite[Chapter 11]{Gyorfi2002}). We introduce a sequence of truncation thresholds $\beta_m > 0$ and define
	$$g_{\beta_m}(y) \coloneqq g(y) \cdot \mathbf{1}_{\{\vert g(y)\vert \le \beta_m\}}.$$
	By the triangle inequality:
	\begin{equation}
		\Vert B_{S_m}^g - B_\mu^g \Vert_{L^2(\nu)} \le \Vert B_{S_m}^g - B_{S_m}^{g_{\beta_m}} \Vert_{L^2(\nu)} + \Vert B_{S_m}^{g_{\beta_m}} - B_\mu^{g_{\beta_m}} \Vert_{L^2(\nu)} + \Vert B_\mu^{g_{\beta_m}} - B_\mu^g \Vert_{L^2(\nu)}.\label{eq:trunc_decomp}
	\end{equation}

	Because the $L^2(\nu)$-norm is bounded by the sup norm, we have:
	\begin{equation}
		\Vert B_\mu^{g_{\beta_m}} - B_\mu^g \Vert_{L^2(\nu)} \le \sup_{a \in Q} \int_{\mathcal{X} \times \mathcal{Y}} I^{(d)}(a, x) \vert g(y)\vert \mathbf{1}_{\{\vert g(y)\vert > \beta_m\}} \, d\mu \le \frac{M}{\beta_m}.\label{eq:poperr}
	\end{equation}

	Applying Markov's inequality yields:
	\begin{equation}
		\mu^m\left( \Vert B_{S_m}^g - B_{S_m}^{g_{\beta_m}} \Vert_{L^2(\nu)} > \tau \right) \le \frac{1}{\tau} \E_{\mu}\left[ \frac{1}{m} \sum_{i=1}^m \vert g(y_i)\vert \mathbf{1}_{\{\vert g(y_i)\vert > \beta_m\}} \right] \le \frac{M}{\beta_m \tau}.\label{eq:markov_empi}
	\end{equation}
	Here  the last  inequalities in \eqref{eq:poperr}  and \eqref{eq:markov_empi}  are obtained by  using  the   inequality  $| g (y) |\le  | g (y) | ^2/\beta_m$ if $|g  (y) | > \beta_m$.

	For the middle term of \eqref{eq:trunc_decomp}, let $\tau \coloneqq \frac{C_1 \sqrt{\gamma_m}}{3}$. Because the truncated function class $\{g_{\beta_m} I^{(d)}(a, \cdot) : a \in Q\}\subset \R^{\Yy \times \Xx}$ inherits  the sequential pointwise separability from the indicator class $\mathcal{F}$, the event
	$$ \mathcal{A}_\tau \coloneqq \left\{ S_m \in \mathcal{Z}^m : \Vert B_{S_m}^{g_{\beta_m}} - B_\mu^{g_{\beta_m}} \Vert_{L^2(\nu)} > \tau \right\} $$
	is  measurable. Assuming $m\tau^2 \ge 8\beta_m^2$ and fixing $S_m \in \mathcal{A}_\tau$, there exists a parameter $a^*$ such that the empirical deviation strictly exceeds $\tau$. The variance of the bounded function $g_{\beta_m} I^{(d)}(a^*, \cdot)$ satisfies $\mathrm{Var}_\mu \le \beta_m^2$. Applying Chebyshev's inequality to this fixed function over the ghost sample $S_m'$, we obtain:
	
\begin{align*} \mu^m \Big( S_m' \in \mathcal{Z}^m : |\E_{\mu_{S_m'}} (g_{\beta_m} I^{(d)}(a^*, \cdot)) - \E_\mu(g_{\beta_m} I^{(d)}(a^*, \cdot))|& > \frac{\tau}{2} \Big) \le \frac{\beta_m^2}{m(\tau/2)^2}\\
	& = \frac{4\beta_m^2}{m\tau^2} \le \frac{1}{2}.
\end{align*}

	Following the same symmetrization logic from Lemma \ref{lem:bound3}, integrating over the measurable set $\mathcal{A}_\tau$ and applying independent Rademacher variables $\sigma_i$ yields:
	\begin{align}
		\mu^m(\mathcal{A}_\tau) &\le 2 (\mu^{2m} \otimes \Uu_{\Z^m_2})\Big( \sup_{a \in Q} \Big\vert \frac{1}{m} \sum_{i=1}^m \sigma_i \big( g_{\beta_m}(y_i) I^{(d)}(a, x_i) \nonumber\\
		& - g_{\beta_m}(y_i') I^{(d)}(a, x_i') \big) \Big\vert > \frac{\tau}{2} \Big). \label{eq:symm_trunc}
	\end{align}
	
	Defining the symmetrized summand 
	$$c_i(a) \coloneqq g_{\beta_m}(y_i) I^{(d)}(a, x_i) - g_{\beta_m}(y_i') I^{(d)}(a, x_i'),$$
	 we  have $|c_i(a)| \le 2\beta_m$.  Let $S_{2m}^{\mathcal{X}} \coloneqq (x_1, \dots, x_m, x_1', \dots, x_m') \in \mathcal{X}^{2m}$ be the projection of the double sample onto $\mathcal{X}$. For any fixed realization of the double sample $S_{2m}$, the values $y_i$ and $y_i'$ are strictly determined data points. Furthermore, because the supremum is taken over $a \in Q$, the terms $g_{\beta_m}(y_i)$ and $g_{\beta_m}(y_i')$ do not depend on $a$ and therefore act as fixed, real-valued scalar weights. Consequently, as $a$ varies across $Q$, the value of the vector $(c_1(a), \dots, c_m(a))$ is completely and uniquely determined by the binary evaluation vector of the indicator class $\mathcal{F}$ on $S_{2m}^{\mathcal{X}}$:
	 $$ \Big(I^{(d)}(a, x_1), \dots, I^{(d)}(a, x_m), I^{(d)}(a, x_1'), \dots, I^{(d)}(a, x_m') \Big). $$ 
	 If two parameters $a_1, a_2 \in Q$ yield the same binary vector for $\mathcal{F}$ on $S_{2m}^{\mathcal{X}}$, they guarantee the exact same real-valued vector for the summands, i.e., $c_i(a_1) = c_i(a_2)$ for all $i \in [1, m]$. 
	 
	 Therefore, the number of distinct realizations of the supremum inside the probability over the induced class $\tilde{\mathcal{F}} \coloneqq \{g_{\beta_m}(\cdot) I^{(d)}(a, \cdot) : a \in Q\}$ on $S_{2m}$ is strictly bounded by the shatter function of the original VC-class $\mathcal{F}$ on $S_{2m}^{\mathcal{X}}$. By the Sauer-Shelah lemma, see \eqref{eq:vcindex}, this is bounded by $\Delta_{2m}(\mathcal{F}, S_{2m}^{\mathcal{X}}) \le (2m+1)^d$.

	 Conditioning on the double sample $S_{2m}$ and applying a union bound over the VC-class $\mathcal{F}$ via the Sauer-Shelah lemma,  see \eqref{eq:vcindex}, the Hoeffding bound becomes: 
	\begin{align}
		\mu^m(\Aa_\tau) &\le 2 \, \E_{S_{2m}\sim \mu^{2m}} \Big[ 2 \Delta_{2m}(\Ff, S_{2m}^\Xx) \exp\big( - \frac{(\tau/2)^2}{2 \sum_{i=1}^m (2\beta_m/m)^2} \big) \Big] \nonumber \\
		&\le 4 (2m+1)^d \exp\big( - \frac{m \tau^2}{32 \beta_m^2} \big). \label{eq:empierror}
	\end{align}

	To assemble the total error, we apply a union bound to \eqref{eq:trunc_decomp} with threshold $\tau \coloneqq \frac{C_1 \sqrt{\gamma_m}}{3}$. Because the population error \eqref{eq:poperr} is deterministic, its probability of exceeding $\tau$ is exactly the indicator function $\mathbf{1}_{\{ M  > \beta_m\tau \}}$. Substituting \eqref{eq:markov_empi}, \eqref{eq:empierror}, and the deterministic indicator into the union bound, and combining with the operator error from Lemma \ref{lem:bound3}, the master decomposition \eqref{eq:master_decomp} evaluates to:  		
	\begin{align}
		(\mu^m)^*\big( S_m: \Vert f_{S_m} - r_g^\mu \Vert_{\mathcal{H}_K}^2 > \eps \big) 
		&\le \frac{3 M}{C_1\beta_m \sqrt{\gamma_m}} + \mathbf{1}_{\{ 3M  > C_1 \beta_m\sqrt{\gamma_m} \}} \nonumber \\
		&\quad + 4(2m+1)^d \exp\left( - \frac{m C_1^2 \gamma_m}{288 \beta_m^2} \right) \nonumber \\
		&\quad + 4(2m+1)^d \exp\left( - \frac{m C_2^2 \gamma_m}{8 C(K, Q)^2} \right). \label{eq:final_assembly}
	\end{align} 
	By setting the truncation sequence $\beta_m$ such that 
	$$\lim_{m \to \infty} \beta_m \sqrt{\gamma_m} = \infty,$$ the indicator function in \eqref{eq:final_assembly} eventually evaluates to $0$, and the Markov bound decays to $0$. Coupling this with the rate condition 
	$$\lim_{m \to \infty} \frac{m \gamma_m}{ \beta_m^2\log m }= \infty$$
	 ensures the exponential terms vanish, establishing \eqref{eq:vapnik_bound}.
\end{proof}

\begin{remark}[Existence of $\gamma_m$]  By setting  $\gamma_m = m ^{-a}$, $\beta_m = m^b$ one verifies that  the  condition  that  $\lim_{m \to \infty}\gamma_m  = 0$ is equivalent to  $a >0$, the condition  that $\lim_{m \to \infty}\beta_m \sqrt{\gamma_m} = \infty $ is equivalent  to  $\frac{a}{2} < b $ and the  condition that  $\lim_{m \to \infty}\frac{m\gamma_m}{\beta_m^2 \log m}= \infty$  is equivalent  to $a + 2b <1$. Thus  letting $b  = 1/4$ we can take  $0 < a < \frac{1}{2}$.
\end{remark}

\begin{remark}[Comparison with Vapnik-Izmailov \cite{VI2015}]\label{rem:VI}
	Note that while Vapnik and Izmailov \cite[Section 2.4]{VI2015} formulated the estimation of the regression function
	$$ r(x) = \int y p(y|x) dy = \int y \frac{p(x, y)}{p(x)} dy $$
	as a stochastic ill-posed problem, their framework implicitly assumes the existence of the conditional density $p(y|x)$ (and consequently, absolute continuity with respect to the Lebesgue measure). Furthermore, they did not propose a Hilbert space extension of Vapnik's theorem \cite[Theorem 7.2]{Vapnik1998} to rigorously solve this problem.
\end{remark}
\begin{remark}[Comparison with other methods]\label{rem:genvapnik}
	For foundational literature detailing the non-parametric estimation of conditional expectations, we refer the reader to Gy\"orfi et al. \cite{Gyorfi2002}. Standard distribution-free regression methods (e.g., Nadaraya-Watson kernel smoothing, $k$-nearest neighbors, or least-squares partitioning) typically estimate the conditional expectation by performing local averaging to directly minimize an empirical $L^2$ risk. 
	
	In contrast, the estimates  in  \eqref{eq:master_decomp}, \eqref{eq:masterb2}, \eqref{eq:final_assembly} constitute a  new  approach  based  on  solving  a  stochastic ill-posed problem  and Vapnik-Chervonenkis empirical process theory.
 By \eqref{eq:lipsmooth}, this estimation inherently provides pointwise-evaluable, and  $C^r$-guarantees for any nonnegative  integer $r$. 
\end{remark}

In  the following theorem  we provide a  sufficient condition  for the uniform consistency of the  algorithm of Theorem  \ref{thm:genvapnik} with respect to $C^r$-distance.

\begin{theorem}\label{thm:uniconsistency}   Assume the statistical learning model above and the rate conditions
	\eqref{eq:gamma_rate}.   Suppose that  the following conditions  (U1)  and (U2) are satisfied  for $\Pp_0 \subset \Pp_K  (\Zz)$:\\
(U1)  There exist    $C_0, M \in\R_{+}$  such that
 $$\sup_{\mu\in \Pp_0} \| r_g ^\mu\| _{\Hh_K}\le C_0, \qquad \sup_{ \mu \in \Pp_0} \int_\Yy |g|^2 d\mu_\Yy \le M. $$

 (U2) Let
\[
\mathfrak M_{\mathcal X}
=\{\mu_{\mathcal X}:\mu\in\mathcal P_0\}
\subset\mathcal P(\mathcal X).
\]
Assume that every measure belonging to the closure of
$\mathfrak M_{\mathcal X}$ in the weak topology of
$\mathcal P(\mathcal X)$ has support $\mathcal X$.

Then  for  every   nonnegative integer  $r$ and $ \eps >0$   we  have
\begin{equation}\label{eq:UCr}
	\lim_{m \to \infty} \sup_{\mu  \in \Pp_0}  (\mu^m)^* \{  S_m : \| f_{S_m} - r^\mu_g\| _{C^r (\Xx)} \ge  \eps \} = 0.
\end{equation}
\end{theorem}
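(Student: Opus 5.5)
By \eqref{eq:lipsmooth}, $\|h\|_{C^r(\Xx)}\le c_r(\Xx,K)\|h\|_{\Hh_K}$. It therefore suffices to prove the $\Hh_K$-version of \eqref{eq:UCr}, uniformly over $\mu\in\Pp_0$, with $\eps$ replaced by $\eps/c_r(\Xx,K)$. The plan is to redo the proof of Theorem \ref{thm:genvapnik}, which rests on Theorem \ref{thm:cvapnikhl} and the bound \eqref{eq:final_assembly}, while tracking which constants depend on $\mu$. In \eqref{eq:final_assembly} the constants $M$, $C(K,Q)$ and $d$ are already uniform in $\mu$ by (U1) and Lemma \ref{lem:HK}. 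The choice of $C_1,C_2$ can also be made uniform: by (U1), $\|r_g^\mu\|_{\Hh_K}\le C_0$, so we fix $C_1,C_2>0$ with $(C_1+C_2C_0)^2<\eps$. With these choices the right-hand side of \eqref{eq:final_assembly} tends to $0$ uniformly in $\mu$ under \eqref{eq:gamma_rate}. The only non-uniform ingredient is the threshold $\gamma_0=\gamma_0(\eps,C_1,C_2,f,A,\Mm)$ from Theorem \ref{thm:cvapnikhl}. It comes from Lemma \ref{lem:weakest} applied to $f=r_g^\mu$ and $A=A_{\mu_\Xx}$, and both of these vary with $\mu$.

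The main step is therefore a uniform version of Lemma \ref{lem:weakest}. Set $r\coloneqq C_1+C_2C_0$, so that $r\ge C_1+C_2\|f\|$ for every $\mu\in\Pp_0$, and set $K_0\coloneqq C_1+(1+C_2)\sqrt{C_0^2+r^2}$. On the good event $\Cc_m$, the proof of Theorem \ref{thm:cvapnikhl} gives $\|f_{S_m}\|^2\le\|f\|^2+r^2$ and $\|A_{\mu_\Xx}f_{S_m}-A_{\mu_\Xx}f\|\le K_0\sqrt{\gamma_m}$, where $f_{S_m}$ and $f$ lie in the fixed ball $B\coloneqq\{h:\|h\|^2\le C_0^2+r^2\}$. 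The claim to prove is the following: for $\eta>0$ there is $\delta>0$ such that for all $\mu\in\Pp_0$ and all $h\in B$,
\[
\|A_{\mu_\Xx}(h-r_g^\mu)\|_{L^2(Q,\nu)}<\delta\ \Longrightarrow\ |\langle h-r_g^\mu,r_g^\mu\rangle_{\Hh_K}|<\eta .
\]
Granting this, the remainder of the proof of Theorem \ref{thm:cvapnikhl} runs verbatim with $\eta=(\eps-r^2)/4$. The case $\|f_{S_m}\|^2-\|f\|^2\le r^2$ is unchanged, so $\|f_{S_m}-f\|^2<\eps$ on $\Cc_m$ whenever $\gamma_m\le\gamma_0$, with $\gamma_0$ independent of $\mu$.

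I expect this uniform claim to be the main obstacle, and I would argue by contradiction. Suppose there are $\mu_j\in\Pp_0$ and $h_j\in B$ with $\|A_{\mu_{j,\Xx}}(h_j-f_j)\|\to0$ and $|\langle h_j-f_j,f_j\rangle|\ge\eta$, where $f_j=r_g^{\mu_j}$. Pass to a subsequence along which $h_j\rightharpoonup h$ and $f_j\rightharpoonup f$ weakly in $\Hh_K$, using the bound (U1) and Eberlein--Shmulyan. Since $\Xx$ is compact, Prokhorov's theorem allows a further subsequence with $\mu_{j,\Xx}\to\mu_\infty$ weakly in $\Pp(\Xx)$. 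By (U2), $\supp\mu_\infty=\Xx$, so $A_{\mu_\infty}$ is injective by Lemma \ref{lem:regillp}(3). The compact embedding $\Hh_K\hookrightarrow C(\Xx)$ turns weak convergence into uniform convergence, so $u_j\coloneqq h_j-f_j\to u\coloneqq h-f$ in $C(\Xx)_\infty$. Then $A_{\mu_{j,\Xx}}u_j-A_{\mu_\infty}u\to0$ in $L^2(Q,\nu)$. Indeed, the term $A_{\mu_{j,\Xx}}(u_j-u)$ is bounded in norm by $\|u_j-u\|_\infty$ using Lemma \ref{lem:regillp}(1). For the term $(A_{\mu_{j,\Xx}}-A_{\mu_\infty})u$, the integrand $I^{(d)}(a,\cdot)u$ is continuous off the boundary of the orthant $\{x\le a\}$. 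For Lebesgue-a.e.\ $a$ this boundary is $\mu_\infty$-null, because only countably many hyperplanes $\{x_i=c\}$ can carry positive mass. Hence pointwise convergence holds for a.e.\ $a$, and dominated convergence in $a$ gives the $L^2(Q,\nu)$ convergence. It follows that $A_{\mu_\infty}u=0$, so $u=0$, that is, $h=f$. The inner product is the delicate point, since $\langle h_j-f_j,f_j\rangle$ pairs two weakly convergent sequences. I would handle it with the identity $2\langle h_j-f_j,f_j\rangle=\|h_j\|^2-\|f_j\|^2-\|h_j-f_j\|^2$, or more cleanly by proving directly that $\|h_j-f_j\|_{\Hh_K}\to0$. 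Failing that, I would take $\|f_{S_m}-f\|^2$ as the quantity to control. The fallback is to use the minimizer inequality \eqref{eq:est1} to obtain $\limsup\|h_j\|^2\le\lim\|f_j\|^2+r^2$, and then, as in the proof of Theorem \ref{thm:cvapnikhl}, to choose $r$ small relative to $\eps$ so that the limiting estimate $\|h_j-f_j\|^2\lesssim r^2+o(1)$ falls below $\eps$.

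Once the uniform $\gamma_0$ is available, the proof finishes as follows. For $m$ large we have $\gamma_m\le\gamma_0$, so \eqref{eq:final_assembly} holds with the constants fixed above for every $\mu\in\Pp_0$. Its right-hand side does not depend on $\mu$ and tends to $0$ by \eqref{eq:gamma_rate}. Taking the supremum over $\mu\in\Pp_0$ and then applying \eqref{eq:lipsmooth} yields \eqref{eq:UCr}.
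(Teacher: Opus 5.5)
\textbf{There is a genuine gap, and it is in your opening reduction.} By using \eqref{eq:lipsmooth} to replace the $C^r$-norm with the $\Hh_K$-norm, you commit yourself to proving uniform consistency in $\Hh_K$. That statement is false under (U1)--(U2). The reason is that (U1) only makes $\{r_g^\mu:\mu\in\Pp_0\}$ bounded in $\Hh_K$, not relatively compact.

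Here is a concrete counterexample. Let $\dim\Hh_K=\infty$, $\Yy=\{\pm1\}$ and $g(y)=y$, and fix a full-support $\mu_\Xx$. Let $(e_j)$ be orthonormal in $\Hh_K$ and take $0<c\le C_K^{-1/2}$. Let $\mu_j$ have marginal $\mu_\Xx$ and conditional law $\mu_j(\{1\}\mid x)=(1+c\,e_j(x))/2$. Then $r_g^{\mu_j}=c\,e_j$, and (U1) and (U2) both hold. Since $e_j\rightharpoonup0$, the compact embedding gives $\|A_{\mu_\Xx}e_j\|_{L^2(Q,\nu)}\le\|e_j\|_\infty\to0$.

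Now take $h_j=0\in B$ in your uniform version of Lemma~\ref{lem:weakest}. Then $\|A_{\mu_\Xx}(h_j-f_j)\|\to0$, but $|\langle h_j-f_j,f_j\rangle|=c^2$, so the claim fails. This is exactly the cross term you flagged, and your fallbacks do not remove it. The identity $\|h_j-f_j\|^2=\|h_j\|^2-\|f_j\|^2+2\langle f_j,f_j-h_j\rangle$ contains the same term, and $\|h_j-f_j\|_{\Hh_K}\to0$ is precisely what fails.

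The $\Hh_K$ target itself also fails. For each $m$, choose $j=j(m)$ with $c\|e_{j(m)}\|_\infty\le\theta\sqrt{\gamma_m}$. Consider the event $\|B^g_{S_m}-B^g_{\mu_{j(m)}}\|\le C_1\sqrt{\gamma_m}$, whose probability tends to $1$ uniformly since $|g|\le1$. On that event, testing the minimizing property of $f_{S_m}$ against $h=0$ gives $\|f_{S_m}\|_{\Hh_K}\le C_1+\theta$. Hence $\|f_{S_m}-r_g^{\mu_{j(m)}}\|_{\Hh_K}\ge c-C_1-\theta$ there. So no $\mu$-independent $\gamma_0$ of the kind you need exists.

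\textbf{The missing idea is to avoid $\Hh_K$-smallness altogether, which is what the paper does.} Work on the good event $\Cc_{m,\mu}$, with arbitrary $C_1,C_2>0$, and set $R_0=\sqrt{C_0^2+(C_1+C_2C_0)^2}$. There one only needs two uniform bounds:
\begin{itemize}
\item $\|f_{S_m}-r_g^\mu\|_{\Hh_K}\le R_0+C_0$;
\item $\|A_{\mu_\Xx}(f_{S_m}-r_g^\mu)\|_{L^2(Q,\nu)}\le[C_1+(1+C_2)R_0]\sqrt{\gamma_m}$.
\end{itemize}
These are combined with a uniform conditional-stability estimate. For every $R,\eps>0$ there is $\delta>0$ such that, for all $\mu\in\Pp_0$, the conditions $\|u\|_{\Hh_K}\le R$ and $\|A_{\mu_\Xx}u\|\le\delta$ imply $\|u\|_{C^r}<\eps$.

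Your contradiction argument proves exactly this estimate when you apply it to the single bounded sequence $u_j=h_j-f_j$. It uses Prokhorov, (U2), the continuity-set argument, dominated convergence and Lemma~\ref{lem:regillp}(3). It yields $u_j\to0$ after passing to a subsequence, and the compact embedding $\Hh_K\hookrightarrow C^r(\Xx)$ makes this convergence hold in $C^r$. No inner product appears anywhere.

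So you already had the decisive ingredient. What breaks your proof is using \eqref{eq:lipsmooth} in the direction $\|\cdot\|_{C^r}\lesssim\|\cdot\|_{\Hh_K}$, which discards precisely the compactness gain that makes the uniform statement true. Combine the stability estimate with the uniform bound $(\mu^m)^*(\Cc_{m,\mu}^c)\to0$, which you correctly extracted from \eqref{eq:final_assembly}, and the proof is complete.
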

\begin{remark}\label{rem:U2}   Sufficient   special cases of   (U2)   are:\\
 1)  $\mu_\Xx$ is fixed and has   full support;\\
 2)  $\mathfrak M_{\mathcal X}$  is weakly closed and  each  $\mu_\Xx$  has full support;\\
 3) for some fixed  full-support  probability measure $\rho$  there exists $ c> 0$ such that
 $$ \mu_\Xx \ge c \rho, \qquad \forall  \mu \in \Pp_0 .$$
\end{remark}
\begin{proof}[Proof of Theorem  \ref{thm:uniconsistency}]  Fix arbitrary constants $C_1, C_2>0$  and  as in \eqref{eq:cm} define
\begin{equation}
\Cc_{m, \mu} \coloneqq \{ S_m 
\in \Zz^m: \| B^g_{S_m} - B^g_\mu\| \le C_1 \sqrt{\gamma_m}, \;  \| A_{S_m} - A_{\mu_\Xx}\|_{\mathrm{op}}  \le  C_2 \sqrt{\gamma_m} \}. \label{eq:cmm}
\end{equation}
Set 
$$b \coloneqq C_1  + C_2 C_0, \qquad  R_0 \coloneqq \sqrt{ C_0^2 + b^2}.  $$
By \eqref{eq:fmbounded}  and  (U1)   we have
\begin{equation} \| f_{S_m} \| _{\Hh_K} \le  R_0, \qquad   	\| f_{S_m}  - r^\mu _g\|  \le R_0 + C_0,   \qquad  \forall  S_m \in \Cc_{m, \mu}   \label{eq:uni3} 
\end{equation}

By \eqref{eq:dist22},  taking into  account \eqref{eq:uni3}, we have
\begin{equation}
	\| A_{\mu_\Xx} ( f_{S_m} - r^\mu_g)\|_{L^2 (\nu)} \le [  C_1  + (1+C_2)R_0]\sqrt{\gamma_m},  \;  \forall  S_m \in \Cc_{m, \mu}.\label{eq:un4}
\end{equation}
 Condition (U2) implies  the following uniform  stability statement. For  any $R, \eps >0$ there exists  $ \delta  >0$ such that  for all $\mu \in \Pp_0$ we have
 \begin{equation}
 	\| u \|_{\Hh_K} \le R, \qquad \| A_{\mu_\Xx} u\|_{L^2 (Q, \nu)} \le \delta  \LRA  \| u \|_{C^r} < \eps. \label{eq:un2}
 \end{equation}
Indeed,   otherwise   one could find  $\mu_j$ and $u_j$  such that 
\begin{equation*}
\| u_j \|_{\Hh_K} \le R, \qquad \| A_{\mu_{j,\Xx}} u_j\|_{L^2 (\nu)}\to  0, \qquad  \|u_j\|_{C^r} \ge  \eps. 
\end{equation*}
After passing to   subsequences,  taking into account the compactness  of the inclusion 
$\Hh_K \to   C^r$, we have
\begin{equation}
	 u_j   \rightharpoonup u \text{ in } \Hh_K , \qquad u_j \to u \text{ in }  C^r, \qquad  \mu_{j, \Xx} \LRA  \lambda. \label{eq:un2n} 
\end{equation}
From \eqref{eq:un2n} we conclude  that
$$\| A_{\mu_{j,\Xx}} (u_j -u)\|_{L^2 (\nu)}\le \| u_j- u\|_\infty  \to 0.$$
For  fixed  $u$ the signed measure  $u\mu_{j, \Xx}$ converges weakly to  $u\lambda$.
For each $i\in\{1,\ldots,d\}$, the set
\[
D_i=\{t\in\R:\lambda(\{x\in\Xx:x_i=t\})>0\}
\]
is at most countable. Hence, for Lebesgue-almost every $a\in Q$,
the lower orthant $(-\infty,a]$ is a $\lambda$-continuity set.
Since $\nu$ is normalized Lebesgue measure on $Q$, this holds
$\nu$-almost everywhere. Therefore, the weak convergence
$u\mu_{j,\Xx}\Rightarrow u\lambda$ implies
\[
(A_{\mu_{j,\Xx}}u)(a)\longrightarrow(A_\lambda u)(a)
\]
for $\nu$-almost every $a\in Q$. Dominated convergence then gives
$A_{\mu_{j,\Xx}}u\to A_\lambda u$ in $L^2(Q,\nu)$. Hence $A_\lambda u=0$. By (U2),
$\operatorname{supp}\lambda=\Xx$, and therefore
Lemma \ref{lem:regillp}(3) gives $u=0$. Since
$u_j\to u$ in $C^r(\Xx)$, this contradicts
$\|u_j\|_{C^r}\ge\eps$.
 This proves \eqref{eq:un2}.

Using the conditional-stability statement \eqref{eq:un2},  and noting  that  $\gamma_m \to 0$,   we derive  from \eqref{eq:un4}
$$\|f_{S_m}  -  r^\mu_g \| _{C^r}  < \eps \qquad \forall  S_m \in \Cc_{m, \mu}.$$
 for all  sufficiently  large  $m$, with  threshold  $\gamma_0$  independent  of $\mu$.  
 The concentration estimates used in deriving
 \eqref{eq:final_assembly} show, uniformly in
 $\mu\in\Pp_0$, that
 \[
 (\mu^m)^*(\Cc_{m,\mu}^c)\longrightarrow0.
 \]
 For all sufficiently large $m$,
 \[
 \left\{
 \|f_{S_m}-r_g^\mu\|_{C^r}\ge\eps
 \right\}
 \subseteq\Cc_{m,\mu}^c.
 \]
 Consequently,
 \[
 \sup_{\mu\in\Pp_0}
 (\mu^m)^*
 \left\{
 \|f_{S_m}-r_g^\mu\|_{C^r}\ge\eps
 \right\}
 \le
 \sup_{\mu\in\Pp_0}
 (\mu^m)^*(\Cc_{m,\mu}^c)
 \longrightarrow0,
 \]

 which proves Theorem \ref{thm:uniconsistency}.
\end{proof}
   For further applications   of  a variant  of  Theorem  \ref{thm:cvapnikhl}  and  Vapnik's theorem \cite[Theorem 7.3]{Vapnik1998}, refer  to \cite{LMPW2026}.

\appendix
\section{Proof of   Lemma \ref{lem:HK}} \label{sec:HK}
In this  Appendix  we  first recall  the concept of  Hardy-Krause variation  and the   generalized Koksma-Hlawka inequality due to Aistleitner-Dick \cite{AD2015} which  we shall use  in the proof of Lemma \ref{lem:HK}. Then we give  a  proof  of Lemma \ref{lem:HK}.

Let
\[
Q=\prod_{j=1}^d[\alpha_j,\beta_j]\subset\R^d.
\]
We first recall the definition of the {\it Vitali variation}. Let
\[
R=\prod_{j=1}^d[c_j,d_j]\subset\R^d
\]
and let $h:R\to\R$. For $s,t\in R$ with $s_j<t_j$ for
every $j$, define the mixed difference of $h$ over $[s,t]$ by
\[
\Delta(h;[s,t])
\coloneqq
\sum_{\varepsilon\in\{0,1\}^d}
(-1)^{d-|\varepsilon|}h(z_\varepsilon),
\]
where
\[
(z_\varepsilon)_j
=
\begin{cases}
	s_j,&\varepsilon_j=0,\\
	t_j,&\varepsilon_j=1.
\end{cases}
\]
For grid partitions
\[
c_j=t_{j,0}<t_{j,1}<\cdots<t_{j,N_j}=d_j,
\qquad 1\le j\le d,
\]
let $R_{\mathbf i}$ denote the corresponding subrectangles. The
$d$-dimensional Vitali variation of $h$ on $R$ is
\[
V^{(d)}(h,R)
\coloneqq
\sup_{\mathcal P}
\sum_{\mathbf i}
\left|\Delta(h;R_{\mathbf i})\right|,
\]
where the supremum is taken over all such grid partitions
$\mathcal P$.

For a function $g:Q\to\R$ and every nonempty
$u\subseteq\{1,\ldots,d\}$, let
\begin{equation}
	Q_u=\prod_{j\in u}[\alpha_j,\beta_j],
	\qquad
	g_u(x_u)\coloneqq g(x_u,\beta_{-u}).\label{eq:qugu}
\end{equation}
The {\it Hardy--Krause variation} of $g$, anchored at the upper corner
$\beta=(\beta_1,\ldots,\beta_d)$, is
\[
V_{HK}(g,Q)
\coloneqq
\sum_{\emptyset\ne u\subseteq\{1,\ldots,d\}}
V^{(|u|)}(g_u,Q_u).
\]

We shall also use the following measure-theoretic description. Let
\[
R^{\mathrm{ho}}
\coloneqq
\prod_{j=1}^d[c_j,d_j).
\]
Suppose that the mixed increments of \(h\) determine a finite signed
Borel measure $\lambda_h$ on $R^{\mathrm{ho}}$ by
\[
\lambda_h\left(\prod_{j=1}^d[s_j,t_j)\right)
=
\Delta(h;[s,t]).
\]
Then
\[
V^{(d)}(h,R)=|\lambda_h|(R^{\mathrm{ho}}).
\]
On $\operatorname{int}R$, the measure $\lambda_h$ agrees with the
mixed distributional derivative of $h$. The half-open formulation
also records possible jumps at the lower boundary of $R$.

In particular, if $g\in C^d(Q)$, then
\[
V_{HK}(g,Q)
=
\sum_{\emptyset\ne u\subseteq\{1,\ldots,d\}}
\int_{Q_u}
\left|
\frac{\partial^{|u|}g(x_u,\beta_{-u})}{\partial x_u}
\right|\,dx_u.
\]

\begin{theorem}[Generalized Koksma--Hlawka inequality]
	\label{thm:AD2}
	Let $\mu,\nu\in\Pp(Q)$, and define
	\[
	F_\mu(a)=\mu([\alpha,a]),
	\qquad
	F_\nu(a)=\nu([\alpha,a]).
	\]  If $g:Q\to\R$ is measurable and has bounded
	Hardy--Krause variation, then
	\[
	\left|
	\int_Q g\,d\mu-\int_Q g\,d\nu
	\right|
	\le
	V_{HK}(g,Q)
	\sup_{a\in Q}|F_\mu(a)- F_\nu(a)|.
	\]
\end{theorem}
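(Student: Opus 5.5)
The plan is to represent $g$ as a signed combination of indicator functions of upper orthants, integrated against finite signed measures. Integrating against $\mu-\nu$ then turns everything into differences of distribution functions. Concretely, for every nonempty $u\subseteq\{1,\ldots,d\}$ I would attach to the section $g_u$ from \eqref{eq:qugu} a finite signed Borel measure $\lambda_u$ on $Q_u$ with
\[
|\lambda_u|(Q_u)\le V^{(|u|)}(g_u,Q_u),
\]
and aim for the anchored (upper corner) expansion
\[
g(x)=g(\beta)+\sum_{\emptyset\ne u\subseteq\{1,\ldots,d\}}(-1)^{|u|}\,\lambda_u\bigl(\{y_u\in Q_u:\ y_u\ge x_u\}\bigr),\qquad x\in Q,
\]
with the appropriate half-open convention on the orthants.

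For smooth $g$ this is just the inclusion--exclusion form of the fundamental theorem of calculus. One telescopes each coordinate from $x_j$ up to $\beta_j$, and takes $d\lambda_u=\partial^{|u|}g(y_u,\beta_{-u})/\partial y_u\,dy_u$. For general $g$ of bounded Hardy--Krause variation I would invoke the measure-theoretic description recalled above: the mixed increments $\Delta(g_u;\cdot)$ define a finite signed measure $\lambda_{g_u}$ on the half-open box $Q_u^{\mathrm{ho}}$, whose total variation is $V^{(|u|)}(g_u,Q_u)$. That is precisely the representation theorem of Aistleitner and Dick \cite{AD2015}, so I would quote it, or prove it by the standard Jordan-type decomposition $g_u=g_u^+-g_u^-$ into completely monotone functions.

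Next I integrate the expansion against $\mu-\nu$. The constant $g(\beta)$ cancels because both are probability measures. For each $u$, Fubini, which is justified since $\lambda_u$ is finite and the integrand is a bounded indicator, gives
\[
\int_Q\lambda_u\bigl(\{y_u\ge x_u\}\bigr)\,d(\mu-\nu)(x)
=\int_{Q_u}(\mu-\nu)\bigl(\{x\in Q:\ x_u\le y_u\}\bigr)\,d\lambda_u(y_u).
\]
Because the coordinates outside $u$ are unconstrained and $Q$ ends at $\beta_{-u}$, the set $\{x_u\le y_u\}\cap Q$ is exactly $[\alpha,(y_u,\beta_{-u})]$. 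The inner quantity is therefore $F_\mu(y_u,\beta_{-u})-F_\nu(y_u,\beta_{-u})$, whose absolute value is at most $\sup_{a\in Q}|F_\mu(a)-F_\nu(a)|$. Bounding each term by $|\lambda_u|(Q_u)$ times this supremum and summing over $u$ gives
\[
\Bigl|\int_Q g\,d\mu-\int_Q g\,d\nu\Bigr|\le\sum_{u}V^{(|u|)}(g_u,Q_u)\,\sup_{a\in Q}|F_\mu(a)-F_\nu(a)|=V_{HK}(g,Q)\,\sup_{a\in Q}|F_\mu(a)-F_\nu(a)|.
\]

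The main obstacle is the first step, namely the representation for non-smooth $g$. The inequality will be applied to $g=I^{(d)}(a,\cdot)h$, which has jumps, and a BV function need not be right- or left-continuous. So one has to check that the orthant convention ($\{y_u\ge x_u\}$ versus $\{y_u>x_u\}$, together with the half-open box $Q_u^{\mathrm{ho}}$) makes the identity hold at every point and not just almost everywhere. It must also produce closed lower orthants $[\alpha,a]$, so that the distribution functions are exactly $F_\mu$ and $F_\nu$. If an open/closed mismatch appears, I would instead approximate $y_u$ by points increasing to it, use monotone convergence of $(\mu-\nu)$ along nested orthants, and note that the supremum over $a$ is unchanged by such limits. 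Measurability of $g$ is needed only so that $\int g\,d\mu$ makes sense, since the representation shows $g$ is bounded.
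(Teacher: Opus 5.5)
Your direct route (a signed-measure representation of $g$ followed by Fubini against $\mu-\nu$) differs from the paper's. Its first step fails in the generality of the statement. You claim that for an arbitrary measurable $g$ of bounded Hardy--Krause variation, the mixed increments of each section $g_u$ define a finite signed Borel measure $\lambda_u$. This would give $g(x)=g(\beta)+\sum_u(-1)^{|u|}\lambda_u(\{y_u\ge x_u\})$ for every $x$ under a fixed orthant convention.

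Any such formula forces one-sided continuity. A finite signed measure is continuous along monotone sequences of sets, so $x\mapsto\lambda(\{y\ge x\})$ is left-continuous and $x\mapsto\lambda(\{y>x\})$ is right-continuous. Equivalently, countable additivity of $[s,t)\mapsto g(t)-g(s)$ forces $g(t_n)\to g(t)$ whenever $t_n\uparrow t$. Already for $d=1$ and $\alpha<c_1<c_2<\beta$, this fails for $g=\mathbf 1_{[c_1,\beta]}+\mathbf 1_{(c_2,\beta]}$:
\begin{itemize}
\item $g$ is nondecreasing and has $V_{HK}(g,Q)=2$;
\item $g$ is not left-continuous at $c_1$ and not right-continuous at $c_2$.
\end{itemize}
So neither orthant convention represents $g$. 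Because $g$ is itself monotone, a Jordan-type splitting into completely monotone parts does not remove the defect either. A representation theorem of this kind can therefore only be quoted under a one-sided continuity hypothesis.

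Your fallback does not close this. Approximating $y_u$ from below and using monotone convergence along nested orthants only repairs the open/closed shape of the lower orthants \emph{after} Fubini. It does not produce $\lambda_u$ in the first place.

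What your argument does establish correctly is the inequality for $g$ whose sections $g_u$ are suitably continuous from below. This class includes $C^d$ functions and the functions $I^{(d)}(a,\cdot)\widetilde f$ used in the proof of Lemma \ref{lem:HK}. For that class your proof is more direct than the paper's, since Fubini handles two arbitrary probability measures at once.

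For this theorem the paper builds no representation at all. It quotes the point-set Koksma--Hlawka inequality of Aistleitner--Dick (their Theorem 1, formulated for measurable functions of bounded Hardy--Krause variation). It applies that inequality to the empirical measure $\mu_N$ of an i.i.d.\ sample from $\mu$, with $\nu$ as the reference measure, and lets $N\to\infty$. Glivenko--Cantelli controls $\sup_a|F_{\mu_N}(a)-F_\mu(a)|$, and the strong law of large numbers controls $\int_Q g\,d\mu_N$.

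To prove the statement as posed along your route, you need either an extra argument for $g$ lacking one-sided continuity, or the paper's reduction to the point-set theorem. In $d=1$, for instance, a layer-cake decomposition of the monotone parts into indicators of upper intervals works. The complements of those intervals are $[\alpha,c]$ or $[\alpha,c)$, and both are controlled by $\sup_a|F_\mu(a)-F_\nu(a)|$.
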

This two-measure form follows from
\cite[Theorem 1]{AD2015} by a probabilistic approximation
argument. Let $X_1,X_2,\ldots$ be i.i.d. with distribution $\mu$ and set
\[
\mu_N\coloneqq\frac1N\sum_{i=1}^N\delta_{X_i}.
\]
By the multivariate Glivenko--Cantelli theorem \ref{thm:GC} and the strong law
of large numbers, there exists a realization such that
\[
\sup_{a\in Q}
|\mu_N([\alpha,a])-\mu([\alpha,a])|\longrightarrow0
\]
and
\[
\int_Q g\,d\mu_N\longrightarrow\int_Q g\,d\mu.
\]
Applying \cite[Theorem 1]{AD2015} to this realization, with $\nu$
as the reference measure, and then letting $N\to\infty$ yields the
asserted inequality.

\begin{proof}[Proof  of Lemma \ref{lem:HK}]   Recall that $   \Xx \subset Q\subset \R^d$.  Let
	\[
	C^d(Q)
	\coloneqq
	\left\{
	g\in C^d(\operatorname{int}Q):
	\partial^\alpha g\text{ extends continuously to }Q
	\text{ for every }|\alpha|\le d
	\right\}.
	\]
	For such a function, the same symbol $\partial^\alpha g$ denotes its
	continuous extension to $Q$, and
	\[
	\|g\|_{C^d(Q)}
	\coloneqq
	\max_{|\alpha|\le d}\sup_{x\in Q}
	|\partial^\alpha g(x)|.
	\]
	Equivalently, $C^d(Q)$ consists of the restrictions to $Q$ of
	$C^d$-functions defined on an open neighborhood of $Q$.
	Define
	$$ C^d_b (\R^d) \coloneqq \Big \{ F\in  C^d (\R^d): \| F\| _{C^d_b} \coloneqq \max_{|\alpha|\le d}\sup_{\R^d} |\p ^\alpha F| < \infty \Big\}.$$  By Fefferman's bounded linear extension theorem
	\cite[Theorem 1]{Fefferman2007},  there exists  a bounded linear operator
	\[
	\mathcal E_d:C^d(\mathcal X)\longrightarrow C_b^d(\R^d)
	\]
	such that
	\[
	(\mathcal E_df)|_{\mathcal X}=f
	\]
	and
	\begin{equation}\label{eq:extensionCm}
		\|\mathcal E_df\|_{C_b^d(\R^d)}
		\le
		C_{\mathrm{ext}}(\mathcal X)
		\|f\|_{C^d(\mathcal X)}.
	\end{equation}
	Here $C^d(\mathcal X)$ is identified, with an equivalent norm, with
	the $C^d(\R^d)$-trace space on $\mathcal X$.
	(For a compact embedded smooth manifold, possibly with smooth
	boundary, the intrinsic $C^d(\Xx)$-norm is equivalent to the
	$C^d(\R^d)$-trace norm. This follows by constructing local
	extensions in finitely many interior and boundary charts and
	patching them with a partition of unity.)
	
	For $f\in\mathcal H_K$, put $\widetilde f=(\mathcal E_df)|_Q$ and define
	\[
	I_f^a(x)\coloneqq
	I^{(d)}(a,x)\widetilde f(x),
	\qquad a,x\in Q.
	\]
	
	Then
	\begin{equation}\label{eq:extensionCmQ}
		\|\widetilde f\|_{C^d(Q)}
		\le
		C_{\mathrm{ext}}(\mathcal X)
		\|f\|_{C^d(\mathcal X)}.
	\end{equation}
	
	Fix a nonempty $u\subseteq\{1,\ldots,d\}$. If
	$a_j<\beta_j$ for some $j\notin u$, then by \eqref{eq:qugu}
	\[
	(I_f^a)_u(x_u)
	=
	I_f^a(x_u,\beta_{-u})=0.
	\]
	Otherwise, putting
	\[
	\phi_u(x_u)\coloneqq
	\widetilde f(x_u,\beta_{-u}),
	\]
	we have
	\[
	(I_f^a)_u(x_u)
	=
	\prod_{j\in u}\mathbf 1_{\{x_j\le a_j\}}\,
	\phi_u(x_u).
	\]
	Let
	\[
	J_{u,a}\coloneqq\{j\in u:a_j<\beta_j\}.
	\]
	Define the finite signed Borel measure $\lambda_{u,a}$ on
	\[
	Q_u^{\mathrm{ho}}
	=
	\prod_{j\in u}[\alpha_j,\beta_j)
	\]
	as the sum, indexed by $v\subseteq J_{u,a}$, of the measures
	\[
	(-1)^{|v|}
	\mathbf 1_{\prod_{j\in u\setminus v}[\alpha_j,a_j]}
	(x_{u\setminus v})
	\,
	\partial_{u\setminus v}\phi_u(x_{u\setminus v},a_v)
	\,dx_{u\setminus v}\otimes\delta_{a_v},
	\]
	with the usual interpretation when $v=\varnothing$ or $v=u$.
	The one-sided distributional product rule, or equivalently a direct
	calculation of mixed increments, gives
	\[
	\lambda_{u,a}
	\left(\prod_{j\in u}[s_j,t_j)\right)
	=
	\Delta\bigl((I_f^a)_u;[s,t]\bigr).
	\]
	Consequently,
	\[
	V^{(|u|)}((I_f^a)_u,Q_u)
	=
	|\lambda_{u,a}|(Q_u^{\mathrm{ho}}).
	\]
	
	Writing $L_j=\beta_j-\alpha_j$, we consequently obtain
	\begin{align*}
		V^{(|u|)}((I_f^a)_u,Q_u)
		&=
		|\lambda_{u,a}|(Q_u^{\mathrm{ho}})\\
		&\le
		\sum_{v\subseteq J_{u,a}}
		\prod_{j\in u\setminus v}L_j\,
		\|\widetilde f\|_{C^d(Q)}\\
		&\le
		\prod_{j\in u}(1+L_j)\,
		\|\widetilde f\|_{C^d(Q)}.
	\end{align*}
	Thus the estimate is uniform in $a\in Q$. Summing over all
	nonempty $u$, we obtain
	\begin{equation}\label{eq:HKCm}
		V_{HK}(I_f^a,Q)
		\le
		C(Q,d)\|\widetilde f\|_{C^d(Q)},
		\qquad a\in Q,
	\end{equation}
	where, for example, one may take
	\[
	C(Q,d)
	=
	\sum_{\emptyset\ne u\subseteq\{1,\ldots,d\}}
	\prod_{j\in u}(1+\beta_j-\alpha_j).
	\]
	We derive Inequality \eqref{eq:HK}  from  Theorem \ref{thm:AD2}  and \eqref{eq:HKCm}, taking into account  \eqref{eq:lipsmooth},  immediately.
\end{proof}

\end{document}